\newcommand{\pp}[2]{\frac{\partial #1}{\partial #2}} 
\newcommand{\dd}[2]{\frac{\delta #1}{\delta #2}}
\newcommand{\bs}[1]{\boldsymbol{#1}}
\newtheorem{definition}{Definition}[section]
\newtheorem{proposition}{Proposition}[section]
\theoremstyle{definition}
\newtheorem{remark}{Remark}[section]
\numberwithin{equation}{section}
\newcounter{savefootnote}
\newcounter{symfootnote}
\newcommand{\symfootnote}[1]{%
   \setcounter{savefootnote}{\value{footnote}}%
   \setcounter{footnote}{\value{symfootnote}}%
   \ifnum\value{footnote}>8\setcounter{footnote}{0}\fi%
   \let\oldthefootnote=\thefootnote%
   \renewcommand{\thefootnote}{\fnsymbol{footnote}}%
   \footnote{#1}%
   \let\thefootnote=\oldthefootnote%
   \setcounter{symfootnote}{\value{footnote}}%
   \setcounter{footnote}{\value{savefootnote}}%
}
\begin{document}
%
%
%
\begin{center}
{\Large A structure-preserving discontinuous Galerkin scheme for the Cahn-Hilliard equation including time adaptivity}
\end{center}
\vspace{-3mm}
\hrulefill
\begin{center}
{Golo A. Wimmer$^{1}$\symfootnote{correspondence to: gwimmer@lanl.gov}, Ben S. Southworth$^1$, Qi Tang$^{2, 1}$}\\
\vspace{2mm}
{\textit{$^1$Theoretical Division, Los Alamos National Laboratory, Los Alamos, New Mexico 87545 USA \\[.5em]
 $^2$School of Computational Science and Engineering, Georgia Institute of Technology, Atlanta, Georgia 30332 USA}}\\
\vspace{4mm}
\today
\end{center}


\begin{abstract}
We present a novel spatial discretization for the Cahn-Hilliard equation including transport. The method is given by a mixed discretization for the two elliptic operators, with the phase field and chemical potential discretized in discontinuous Galerkin spaces, and two auxiliary flux variables discretized in a divergence-conforming space. This allows for the use of an upwind-stabilized discretization for the transport term, while still ensuring a consistent treatment of structural properties including mass conservation and energy dissipation. Further, we couple the novel spatial discretization to an adaptive time stepping method in view of the Cahn-Hilliard equation's distinct slow and fast time scale dynamics. The resulting implicit stages are solved with a robust preconditioning strategy, which is derived for our novel spatial discretization based on an existing one for continuous Galerkin based discretizations. Our overall scheme's accuracy, robustness, efficient time adaptivity as well as structure preservation and stability with respect to advection dominated scenarios are demonstrated in a series of numerical tests.
\end{abstract}
\textit{Keywords.} Cahn-Hilliard equation, discontinuous Galerkin, structure preservation, upwind discretization, adaptive time stepping.
%
%
\section{Introduction}
The Cahn-Hilliard (CH) equation models phase separation using a continuous phase $\phi$ \cite{cahn1958free}, and can be used to model a variety of physical phenomena including spinodal decomposition, material microstructure evolution \cite{chen2002phase, kim2016basic}, and multiphase fluid flows \cite{anderson1998diffuse}. One common form is given by
\begin{equation}
\pp{\phi}{t} - \nabla \cdot \left(d(\phi) \nabla \big(\phi^3 - \phi - \epsilon^2 \Delta \phi \big)\right) = 0, \label{CH_cts_simple}
\end{equation}
for diffusion coefficient $d$ and $\epsilon > 0$. Numerical discretizations of the CH equation are required to tackle a series of challenges including a highly stiff phase field evolution due to the occurrence of a biharmonic operator as well as nonlinear terms. As an additional challenge to numerical solver efficiency, the latter nonlinearity can give rise to both very fast and slow time scales. Further, the equation contains structural properties including a conserved total amount of phase and dissipated energy, up to inflow/outflow effects and transfers to and from kinetic energy in the context of a CH-fluid flow coupled system of equations. This large variety of application areas as well as modeling challenges \cite{miranville2019cahn} associated with the equation has given rise to an equally rich literature of numerical discretization methods for the CH equations (see e.g., the review \cite{lee2014physical}), including coupled with flow (e.g., \cite{dehghan2021numerical, diegel2015analysis, khanwale2022fully, kim2004conservative} and references therein).

In the context of fluid flow, one popular way to discretize the CH equation is given by discontinuous Galerkin (DG) finite element methods (e.g. \cite{fu2020divergence} and references therein), which allow for easy meshing of general domains, are readily available in finite element libraries (e.g., \cite{anderson2021mfem, FiredrakeUserManual}), and in particular -- unlike continuous Galerkin methods (CG) -- ensure local mass conservation. The equation is typically posed in a mixed form, introducing the chemical potential
\begin{equation}
\mu = \phi^3 - \phi - \epsilon \Delta \phi \label{mu_simple}
\end{equation}
as an auxiliary variable. This results in two separate Laplacian operators, which are then discretized weakly in a non-conforming fashion, requiring some form of flux-type contribution defined on mesh facets, including stabilization terms to ensure coercivity (e.g., \cite{feng2016analysis, kay2009discontinuous, kirk2023numerical, liu2019numerical, liu2021unconditionally} and references therein). However, such facet contributions then no longer allow for a natural energy dissipation law, and require special reformulations of the energy functional (such as \cite{liu2021unconditionally, zhao2023numerical}) in order to recover a discrete analogue. Next to these considerations regarding spatial discretizations, specialized time discretizations are also required, and are aimed at adaptive time stepping (e.g. \cite{fu2021linear} for a CH/flow coupled problem), provable energy stability (e.g., \cite{akrivis2019energy, chen2019positivity, diegel2016stability, wu2014stabilized}), and general long-term stability such as through the scalar auxiliary variable (SAV) approach developed for gradient flows including the Cahn-Hilliard equation \cite{shen2018scalar, shen2019new}. Other recent advances in stable time discretization methods for the CH equation include exponential time integrators \cite{fu2022energy, fu2024higher} and implicit-explicit Runge--Kutta integrators \cite{fu2024energy}.

In this work, we present a novel discontinuous Galerkin based spatial discretization that naturally preserves the CH equation's energy dissipative property. Rather than relying on non-conforming type methods including facet integrals, we discretize the two Laplacians occurring in the CH equation's phase field-chemical potential mixed form using a mixed finite element method \cite{arnold2006finite, boffi2013mixed}. In particular, the phase field's and chemical potential's gradients are represented weakly using divergence-conforming auxiliary variables, leading to an overall formulation involving a total of four variables. We show the four variable formulation to be the dual form of a standard conforming mixed CG-based discretization \cite{diegel2015analysis} along a discrete de-Rham complex of compatible finite element spaces. Based on this observation, we further present a robust preconditioning strategy based on swapping rows and applying a diagonal preconditioner as devised in \cite{brenner2018robust} for the standard CG-based formulation. To the best of our knowledge, this forms the first penalty parameter-free DG spatial discretization method for the CH equations satisfying the energy dissipative property, using a strongly consistent discrete form of the energy.

In order to test our new spatial discretization on a variety of different scenarios for the CH equation, we include a flow term discretized using standard DG upwinding, and further consider adaptive time stepping using the TR-BDF2 method \cite{bank1985transient}. In particular, this leads to a fully implicit scheme, necessitating the aforementioned specially designed preconditioning strategy. Our test scenarios include verification for our novel spatial discretization's order of accuracy in accordance with expected values for DG-based mixed discretizations of the Laplacian, as well as structure-preserving properties for mass conservation and energy dissipation. Additionally, we verify our preconditioning method's robustness. Finally, we consider fast/slow time scale dynamics using time steps that vary by several orders of magnitude, and examine our novel method's behavior in advection dominated regimes.

The remainder of this work is structured as follows: In Section \ref{sec_background}, we review the Cahn-Hilliard equation and its structural properties, as well as the additional terms included when coupling to fluid equations. Further, we include a brief description of a standard, CG-based mixed discretization. In Section \ref{sec_novelty}, we introduce our novel scheme, discussing a) the compatible finite element based spatial discretization and its structure-preserving properties, b) the adaptive time discretization, and c) the preconditioning strategy for the resulting implicit systems of equations. In Section \ref{sec_Numerical_results}, we present and discuss numerical results. Finally, in Section \ref{sec_conclusion}, we review our results and discuss possible future work.
%
\section{Background} \label{sec_background}
In this section, we review the Cahn-Hilliard equation and its structural properties, and briefly describe its coupling to fluid equations. Further, we outline a standard, CG-based spatial discretization thereof, which preserves its structure.
\subsection{Cahn-Hilliard equation}
A more general formulation of the CH equation \eqref{CH_cts_simple}, for $\phi$ defined over a domain $\Omega$, is given by \cite{kim2016basic}
\begin{equation}
\pp{\phi}{t} - \nabla \cdot \left(d(\phi) \nabla \big(F'(\phi) - \epsilon^2 \Delta \phi \big)\right) = S, \label{CH_cts}
\end{equation}
for potential $F$ and forcing term $S$. Values of $\phi = -1$, $\phi = 1$ indicate the two phases, respectively, and values inbetween indicate a transition zone. As in \eqref{mu_simple}, the expression to which the gradient is applied can be identified as the chemical potential
\begin{equation}
\mu = F'(\phi) - \epsilon^2 \Delta \phi; \hspace{1cm} \pp{\phi}{t} - \nabla \cdot \left(d(\phi) \nabla \mu\right) = S. \label{CH_mixed_cts}
\end{equation}
The non-negative diffusion coefficient $d$ can take various forms, including the transmission type $d(\phi) = d_0 (1 - \phi)(1 + \phi)$ for $d_0 > 0$, as well as a simple constant $d_0$. In this work, for simplicity, we consider $d$ as a constant, noting that the discrete structural properties to be derived for our novel spatial discretization in Section \ref{sec_space} below equally hold for non-constant $d$. Additionally, as in \eqref{CH_cts_simple} we will consider a double-well potential of the form
\begin{equation}
F(\phi) = \frac{1}{4}\left(1 - \phi^2 \right)^2, \hspace{1cm} \text{so that} \hspace{1cm} F'(\phi) = \pp{F}{\phi} = \phi^3 - \phi. \label{F_Fp}
\end{equation}
Similarly, the results of Section \ref{sec_space} hold true for general types of $F$. Finally, in this work, we consider \eqref{CH_cts} either on periodic domains or equipped with homogeneous Neumann boundary conditions of the form
\begin{equation}
\mathbf{n} \cdot \nabla \phi = 0, \hspace{1cm} \mathbf{n} \cdot \nabla \mu = 0, \label{Neumann_BCs}
\end{equation}
for outward unit normal vector $\mathbf{n}$ at the domain's boundary $\partial \Omega$.

\textbf{Structural properties.}
The Cahn-Hilliard equation \eqref{CH_cts} satisfies total mass conservation up to the forcing term $S$ \cite{miranville2019cahn}; this can readily be seen since the equation is of flux form
\begin{equation}
\pp{\phi}{t} - \nabla \cdot \mathbf{j} = S,
\end{equation}
for flux $\mathbf{j} = d(\phi)\nabla \mu$. Integrating over $\Omega$ and applying the divergence theorem together with the homogeneous Neumann boundary condition for $\mu$, we find
\begin{equation}
\frac{d}{dt}\int_\Omega \phi \; dx = \int_\Omega \pp{\phi}{t} \; dx = \int_\Omega \nabla \cdot \mathbf{j} \; dx + \int_\Omega S \; dx = \int_{\partial \Omega} d(\phi) \mathbf{n} \cdot \nabla \mu \; dx + \int_\Omega S \; dx = \int_\Omega S \; dx. \label{mass_dt}
\end{equation}
Additionally, the Cahn-Hilliard equation admits a non-increasing energy functional, again up to the forcing term. The latter functional is given by
\begin{equation}
H_{C\!H}(\phi) = \int_\Omega \left(F(\phi) - \frac{\epsilon^2}{2}|\nabla \phi|^2\right)dx, \label{CH_energy}
\end{equation}
and in view of its rate of change in time, we first derive its functional derivative with respect to $\phi$. For a suitable function space $\mathbb{V}_\phi$ in which $\phi$ is defined, the functional derivative evaluated at $\phi$ is given by
\begin{align}
\left\langle \eta, \dd{H_{C\!H}}{\phi}\right\rangle \coloneqq \lim_{\alpha \rightarrow 0} \frac{1}{\alpha}\left(H_{C\!H}(\phi + \alpha \eta) - H_{C\!H}(\phi)\right) = \left\langle \eta, \dd{F}{\phi} \right\rangle + \left\langle \nabla \eta, \epsilon^2 \nabla \phi \right\rangle && \forall \eta \in \mathbb{V}_\phi, \label{var_form_H}
\end{align}
where brackets denote the $L^2$-inner product in $\Omega$. In a strong form with the above homogenous Neumann boundary condition for $\phi$, we then get the familiar expression
\begin{equation}
\dd{H_{C\!H}}{\phi} = \phi^3 - \phi - \epsilon^2 \Delta \phi = \mu,
\end{equation}
after integrating by parts and using $F$ as in \eqref{F_Fp}, and further noting that \eqref{var_form_H} holds true for any test function $\eta \in \mathbb{V}_\phi$. Using this, the non-increasing total energy property (up to forcing $S$) can then be derived using the chain rule in time and integration by parts in space according to
\begingroup
\addtolength{\jot}{2mm}
\begin{align}
\frac{dH_{C\!H}}{dt} = &\left\langle \dd{H_{C\!H}}{\phi}, \pp{\phi}{t} \right\rangle=\left\langle \mu, \pp{\phi}{t} \right\rangle= \left\langle \mu, \nabla \cdot \left(d(\phi) \nabla \mu\right)\right\rangle + \left\langle \mu, S \right\rangle \nonumber \\
= & -\left\langle \nabla \mu, d(\phi) \nabla \mu\right\rangle + \left\langle \mu, S \right\rangle = - \left\|\sqrt{d(\phi)}\nabla \mu\right\|_2^2 + \left\langle \mu, S \right\rangle. \label{HCH_dt}
\end{align}
\endgroup
In particular, in the absence of forcing $S$ and for homogeneous Neumann boundary conditions, we find that $\tfrac{dH}{dt} \le 0$.

\textbf{Coupling to fluid equations}. In the presence of fluid flow, the phase field is advected using a transport operator in flux form, that is
\begin{equation}
\pp{\phi}{t} - \nabla \cdot \left(d(\phi) \nabla \big(F'(\phi) - \epsilon^2 \Delta \phi \big)\right) + \nabla \cdot (\mathbf{u} \phi) = S, \label{CH_cts_flow}
\end{equation}
where $\mathbf{u}$ denotes the fluid flow field. In this work, for a simpler presentation we assume free-slip boundary conditions of the form
\begin{equation}
\mathbf{u} \cdot \mathbf{n} = 0 \;\; \text{for} \;\; \mathbf{x} \in \partial \Omega,
\end{equation}
noting that what follows applies to other types of boundary conditions in an analogous fashion. Assuming for simplicity the fluid system's density and thermodynamic variables to be independent of $\phi$, the corresponding coupling then consists of an additional forcing term $\tfrac{\phi}{\rho}\nabla \mu$ in the momentum equation
\begin{equation}
\pp{\mathbf{u}}{t} + \dots + \frac{\phi}{\rho}\nabla \mu = 0, \label{u_eqn_cts}
\end{equation}
for fluid density $\rho$ and dots denoting additional terms such as velocity transport and the pressure gradient term. As for the Cahn-Hilliard equation, we can explore structural properties by considering the coupling's energetics. This time, the total energy functional consists of \eqref{CH_energy}, together with the kinetic energy
\begin{equation}
H_{K\!E}(\rho, \mathbf{u}) = \frac{1}{2} \int_\Omega \rho |\mathbf{u}|^2 \; dx,
\end{equation}
and possibly other types of sub-energy, such as a total internal energy. We can then show the phase transport term in \eqref{CH_cts_flow} and the corresponding coupling term in \eqref{u_eqn_cts} to be energetically consistent. Noting that it can be shown that $\dd{H_{K\!E}}{\mathbf{u}} = \rho \mathbf{u}$, we find using the chain rule in time,
\begingroup
\addtolength{\jot}{2mm}
\begin{subequations} \label{H_consistent_cts}
\begin{align}
& \frac{dH_{K\!E}}{dt} = \left\langle \dd{H_{KE}}{\rho}, \pp{\rho}{t} \right\rangle + \left\langle \dd{H_{KE}}{\mathbf{u}}, \pp{\mathbf{u}}{t} \right\rangle =  \left\langle \dd{H_{KE}}{\rho}, \pp{\rho}{t} \right\rangle - \dots - \left\langle \mathbf{u}\phi, \nabla \mu \right\rangle, \label{HKE_dt_flow} \\
&\frac{dH_{C\!H}}{dt} =  \left\langle \dd{H_{C\!H}}{\phi}, \pp{\phi}{t} \right\rangle = - \left\|\sqrt{d(\phi)}\mu\right\|_2^2 + \left\langle \mu, S \right\rangle - \left\langle \mu, \nabla \cdot (\mathbf{u} \phi) \right\rangle \nonumber \\
&\hspace{35mm} =  - \left\|\sqrt{d(\phi)}\mu\right\|_2^2 + \left\langle \mu, S \right\rangle + \left\langle \nabla \mu, \mathbf{u} \phi\right\rangle \label{HCH_dt_flow}
\end{align}
\end{subequations}
\endgroup
where in \eqref{HKE_dt_flow} we substituted equation \eqref{u_eqn_cts} and cancelled $\rho$ in the right-hand side's last term. Further, in \eqref{HCH_dt_flow}, we substituted equation \eqref{CH_cts_flow} and inserted the derivation \eqref{HCH_dt} for the first two terms on the right-hand side. Finally, we applied integration by parts in space for the last term on the right-hand side, using the free-slip boundary conditions for $\mathbf{u}$. In particular, energetic consistency for the velocity-phase field coupling then follows, since the transport and forcing terms $\nabla \cdot (\mathbf{u}\phi)$ and $\tfrac{\phi}{\rho}\nabla \mu$, respectively, yield a consistent energy exchange of the form
\begin{equation}
\frac{dH_{K\!E}}{dt} = \dots - \left\langle \mathbf{u}\phi, \nabla \mu \right\rangle, \hspace{1cm} \frac{dH_{C\!H}}{dt} =  \dots  + \left\langle \mathbf{u} \phi, \nabla \mu\right\rangle. \label{KE_CH_energy}
\end{equation}

%
\subsection{Standard CG discretization} \label{background_CG}
The Cahn-Hilliard equation \eqref{CH_cts} can be discretized in a structure-preserving manner in space using a mixed continuous Galerkin (CG) scheme \cite{diegel2016stability}, where $\mu$ is considered as an auxiliary variable. For a suitable CG space $\mathbb{V}_{\text{CG}}$ -- to be specified further in the numerical results section -- we find $(\phi_h, \mu_h) \in \mathbb{V}_{\text{CG}} \times \mathbb{V}_{\text{CG}}$
\begingroup
\addtolength{\jot}{2mm}
\begin{subequations} \label{CG_discr}
\begin{align}
&\left\langle \eta, \pp{\phi_h}{t} \right\rangle + \left\langle \nabla \eta, d(\phi_h) \nabla \mu_h \right\rangle = \langle \eta, S \rangle, & \forall \eta \in \mathbb{V}_{\text{CG}}, \label{CG_discr_phi}\\
&\left\langle \eta, \mu_h \right\rangle = \left\langle \eta, \phi_h^3 - \phi_h \right\rangle + \left\langle \nabla \eta, \epsilon^2 \nabla \phi_h \right\rangle , & \forall \eta \in \mathbb{V}_{\text{CG}},
\end{align}
\end{subequations}
\endgroup
where we have incorporated the homogeneous Neumann boundary conditions weakly. The structural properties -- given by the mass conservation \eqref{mass_dt} and non-increasing energy \eqref{HCH_dt} -- can be derived in an analogous way to the continuous case. For instance, for the energy development we consider a discrete energy $H_{CG}(\phi_h)$ defined as \eqref{CH_energy} with $\phi$ replaced by $\phi_h$, with discrete variational derivative $\dd{H_{C\!H}}{\phi_h} \in \mathbb{V}_{\text{CG}}$ derived according to
\begin{align}
\left\langle \chi, \dd{H_{C\!H}}{\phi_h}\right\rangle \!=\! \left\langle \chi, \dd{F}{\phi_h} \right\rangle \!+\! \left\langle \nabla \chi, \epsilon^2 \nabla \phi_h \right\rangle \!=\! \left\langle \chi, \phi_h^3 - \phi_h \right\rangle \!+\! \left\langle \nabla \chi, \epsilon^2 \nabla \phi_h \right\rangle \!=\! \langle \chi, \mu_h \rangle && \forall \chi \in \mathbb{V}_{\text{CG}}. \label{var_form_H_discr}
\end{align}
In particular, we can then use \eqref{CG_discr_phi} with $\eta = \mu_h$ to obtain a discrete version of \eqref{HCH_dt} given by
\begin{equation}
\frac{dH_{C\!H}}{dt} = \left\langle \dd{H_{C\!H}}{\phi_h}, \pp{\phi_h}{t} \right\rangle = \left\langle \mu_h, \pp{\phi_h}{t} \right\rangle = - \left\|\sqrt{d(\phi_h)}\nabla \mu_h\right\|_2^2 + \langle \mu_h, S \rangle.
\end{equation}

Finally, the standard CG discretization readily extends to the case including fluid flow, discretizing the transport and forcing terms according to
\begingroup
\addtolength{\jot}{2mm}
\begin{subequations} \label{CG_coupling}
\begin{align}
&\left\langle \eta, \pp{\phi_h}{t} \right\rangle + \left\langle \nabla \eta, d(\phi_h) \nabla \mu_h \right\rangle  - \left\langle \nabla \eta, \mathbf{u}_h \phi_h \right\rangle = \langle \eta, S \rangle, & \forall \eta \in \mathbb{V}_{\text{CG}}, \label{CG_CH_flow} \\
&\left\langle \rho_h \mathbf{w}, \pp{\mathbf{u}_h}{t} \right\rangle + \left\langle \nabla \mu_h, \mathbf{w} \phi_h \right\rangle= 0 & \forall \mathbf{w} \in \mathbb{V}_{u_h},
\end{align}
\end{subequations}
\endgroup
for discrete fields $\mathbf{u}_h$, $\rho_h$ discretized in suitable function spaces. Again, energetic consistency then follows analogously to the continuous case \eqref{H_consistent_cts}, by setting $\eta = \mu_h$ and $\mathbf{w} = \mathbf{u}_h$ in the discrete phase field and momentum equations, respectively.

We conclude the background section with some additional observations on the CG based discretization \eqref{CG_discr}, which serve as a guide for our novel discontinuous Galerkin (DG) based scheme to be introduced in the next section.


\textbf{Efficient and scalable solvers.} Due to the severe time step restriction arising from the fourth order operator within the Cahn-Hilliard equation, time discretizations for the stiff linear part are required to be implicit. This results in a nonlinear coupled system of equations based on \eqref{CG_discr}, which needs to be solved in each time step (or each implicit stage). One possible way to achieve this in an efficient and scalable manner is described in \cite{brenner2018robust}. In the latter work, a Newton method is applied to the nonlinear system, where each iteration requires solving the Jacobian
\begin{equation}
\begin{bmatrix}
M_{CG} & \sqrt{\delta t} L \\
-\sqrt{\delta t}\left(F'' + \epsilon^2L\right) & M_{CG}
\end{bmatrix}
\begin{bmatrix}
\delta \phi\\
\delta \mu\\
\end{bmatrix} 
=
\begin{bmatrix}
R_\phi \\
R_\mu
\end{bmatrix},
\label{phimu_system}
\end{equation}
for time step $\delta t$ and where $(\delta \phi, \delta \mu)$ and $(R_\phi, R_\mu)$ denote incremental updates and residuals, respectively. The block matrices $M_{CG}$ and $L$ denote the CG mass and stiffness matrices, respectively. Additionally, given their choice of implicit time discretization, $F''$ is defined as $F''_{ij} = \langle3(\phi_h^k)^2 \eta_i, \eta_j\rangle$ for $\eta_i, \eta_j \in \mathbb{V}_{\text{CG}}$, where $\phi_h^k$ denotes the solution at the $k^{th}$ Newton iteration. The diffusion coefficient is set to $d = 1$, and for a simpler presentation we skipped modifications in \cite{brenner2018robust} related to the mean phase $\int_\Omega \phi_h \; dx$.

In \cite{brenner2018robust}, to solve for \eqref{phimu_system}, the row-swapped reformulation
\begin{equation}
\begin{bmatrix}
-\sqrt{\delta t}\left(F'' + \epsilon^2L\right) & M_{CG} \\
M_{CG} & \sqrt{\delta t} L
\end{bmatrix}
\begin{bmatrix}
\delta \phi\\
\delta \mu\\
\end{bmatrix} 
=
\begin{bmatrix}
R_\mu \\
R_\phi
\end{bmatrix},
\label{phimu_system_swapped}
\end{equation}
is considered instead, which is preconditioned using
\begin{equation}
\begin{bmatrix}
M_{CG} + \sqrt{\delta t} \epsilon^2 L & 0 \\
0 & M_{CG} + \sqrt{\delta t} L
\end{bmatrix}. \label{CG_diag_precon}
\end{equation}
The preconditioner is shown to be robust when applied to an approximated version of \eqref{phimu_system_swapped}, where $F''$ is replaced by $3M_{DG}$. The latter approximation is justified by assuming that $\phi_h^k$ is close to either $1$ or $-1$ in most of the domain, so that $\left(\phi_h^{k}\right)^2 \approx 1$. This preconditioner in \cite{brenner2018robust} was numerically demonstrated to be scalable with respect to $\epsilon$ and mesh size for a given time step (up to the approximation used to replace $F''$), as well as with respect to mesh size and time step for a given $\epsilon$. Finally, the two diagonal blocks in \eqref{CG_diag_precon} correspond to a weak CG-based discretization of the Helmholtz operator (with positive coefficient), and can be solved for efficiently using classical AMG \cite{benzi2005numerical,ruge1987algebraic}.

\textbf{Transport.}
While the mixed CG based spatial discretization \eqref{CG_discr} is structure-preserving and can be solved efficiently and accurately in the context of implicit time discretizations, its accuracy may degrade when coupled to fluid flow as in \eqref{CG_coupling}, in the case of strongly advection dominated regimes. This holds true since the transport discretization does not contain any measures for stabilization, and the higher  P\'eclet number -- that is the ratio of advective to diffusive transport rate -- the more transport noise we should expect \cite{kuzmin2010guide}. In practice, for the CH equations, the noise is then either ``smeared out'' by the biharmonic operator, or triggers phase separation in an otherwise phase neutral zone of value $\phi = 0$, both of which lead to a deteriorated overall solution accuracy.

Unfortunately, a straightforward application of standard stabilization methods such as SUPG, or more generally variational multiscale methods \cite{hughes2010stabilized}, alter the time derivative and chemical potential flux terms, and therefore render the discretization no longer structure-preserving, which is known to be critical to obtain a physical solution \cite{shen2019new}. Further, regardless of stability, one may prefer a local form of mass conservation as provided for by DG methods, unlike CG methods where its mass is conserved only in a global sense. However, as stated in the introduction, current penalty based DG methods in the literature -- such as \cite{kay2009discontinuous} for a prototypical form of an interior penalty based setup -- also do not naturally preserve the equation's energetic structure, due to the introduction of facet integrals and penalty terms.

%
%
\section{Discretization} \label{sec_novelty}
\textbf{Motivation.} In order to avoid the DG facet integrals and penalty terms mentioned at the end of the previous section, we first observe that the two discrete, CG-based equations \eqref{CG_discr} for $\phi_h$ and $\mu_h$ contain discrete weak formulations for the Poisson equation
\begin{equation}
\Delta v = -f, \hspace{1cm} \mathbf{n} \cdot \nabla v = 0 \;\;\text{for}\;\; \mathbf{x} \in \partial \Omega, \label{Poisson}
\end{equation}
for unknown scalar field $v$, and known right-hand side $f$. In particular, the preconditioning strategy \eqref{CG_diag_precon} by \cite{brenner2018robust} outlined above is centered on solving two such equations in a decoupled fashion.

Alternatively, the Poisson equation can be posed in a mixed form using $u \in L^2(\Omega)$, and its gradient $\bs{\chi}$ as an auxiliary variable. For this purpose, we consider so-called compatible finite element spaces, which follow the de-Rham complex
\begin{displaymath}
    \xymatrix{
        H^1(\Omega) \ar[r]^{\nabla} \ar[d]^{\pi^0} & H(\text{{\normalfont curl}};\Omega) \ar[r]^{\nabla \times} \ar[d]^{\pi^1} & H(\text{{\normalfont div}};\Omega)\ar[d]^{\pi^2} \ar[r]^{\nabla \cdot} & L^2(\Omega) \ar[d]^{\pi^3} \\
      \mathbb{V}_{\text{CG}} \ar[r]^{\nabla}  &\mathbb{V}_{H(\text{curl})} \ar[r]^{\nabla \times} & \mathbb{V}_{H(\text{div})} \ar[r]^{\nabla \cdot}  & \mathbb{V}_{\text{DG}}}
\end{displaymath}
for bounded projections $\pi_i$, $i \in \{1, 2, 3, 4\}$. Further, $H^1(\Omega)$, $H(\text{{\normalfont curl}};\Omega)$ and $H(\text{{\normalfont div}};\Omega)$ denote the gradient-, curl- and divergence-conforming Sobolev spaces, respectively, and the spaces in the bottom row denote discrete versions thereof. In other words, the de-Rham complex and its discrete counterpart define a chain of spaces such that one space maps to another via a differential operator; for details, see \cite{arnold2006finite,boffi2013mixed}.  Given such compatible discrete spaces $\mathbb{V}_{\text{DG}}$ and $\mathbb{V}_{H(\text{div})}$ (to be specified further in the numerical results section), a stable mixed discretization is then given by finding $\bs{\chi}_h, v_h \in (\mathbb{V}_{H(\text{div})}, \mathbb{V}_{\text{DG}})$ such that \cite{duran2008mixed}
\begingroup
\begin{subequations} \label{Poisson_mixed_discr}
\begin{align}
&\langle \psi, \nabla \cdot \bs{\chi}_h \rangle = \langle \psi, -f \rangle &\forall \psi \in  \mathbb{V}_{\text{DG}},\\
&\langle \mathbf{w}, \bs{\chi}_h \rangle = - \langle \nabla \cdot \mathbf{w}, v_h \rangle &\forall \mathbf{w} \in \mathring{\mathbb{V}}_{H(\text{div})},
\end{align}
\end{subequations}
\endgroup
where $\mathring{\mathbb{V}}_{H(\text{div})}$ denotes the space $\mathbb{V}_{H(\text{div})}$ equipped with homogeneous boundary conditions, that is
\begin{equation}
\mathring{\mathbb{V}}_{H(\text{div})} \subset \mathring{H}(\text{div}; \Omega) = \{\mathbf{w} \in H(\text{div}; \Omega) \; | \; \mathbf{w} \cdot \mathbf{n}_{|_{\partial \Omega}} = 0\}.
\end{equation}
In other words, in this setting the Neumann boundary conditions for $v$ appear as Dirichlet boundary conditions for the discrete gradient $\bs{\chi}_h$ of $v$; this works since in practice, in order to ensure a well-defined divergence operator, discrete divergence conforming spaces contain vector functions with continuous normal components across facets. This is achieved through degrees of freedom defined as normal components of the vector field along element facets, including boundary facets. For further details on mixed discretizations for the Poisson equation, including stability considerations, see e.g., \cite{boffi2013mixed,duran2008mixed}. In particular, for our purposes, this is a conforming discretization that does not require facet integrals or penalty terms.

\subsection{Structure-preserving spatial discretization} \label{sec_space}

Given the motivation above, we next construct our novel spatial discretization for the Cahn-Hilliard equation \eqref{CH_mixed_cts} in a mixed form for $(\phi, \mu)$, discretizing the two Laplacians according to the mixed finite element method \eqref{Poisson_mixed_discr}. For this purpose, we proceed as in Section \ref{sec_background}, and first consider the case without fluid coupling.

\begin{definition} \label{def_dg}
Consider compatible finite element spaces $\mathbb{V}_{H(\text{div})}, \mathbb{V}_{\text{DG}}$. Further, set $\mathring{\mathbb{V}}_{H(\text{div})}$ to be $\mathbb{V}_{H(\text{div})}$ equipped with homogenous boundary conditions. Then the mixed DG-based discretization for the Cahn-Hilliard equation \eqref{CH_mixed_cts} with $d(\phi) > 0$ and equipped with homogeneous boundary conditions \eqref{Neumann_BCs} is given by finding $(\phi_h, \mathbf{j}_h, \mu_h, \bs{\sigma}_h) \in (\mathbb{V}_{\text{DG}}, \mathring{\mathbb{V}}_{H(\text{div})}, \mathbb{V}_{\text{DG}}, \mathring{\mathbb{V}}_{H(\text{div})})$, such that
\begingroup
\addtolength{\jot}{2mm}
\begin{subequations} \label{CH_DG}
\begin{align}
&\left\langle \psi, \pp{\phi_h}{t} + \nabla \cdot \mathbf{j}_h \right\rangle = \langle \psi, S\rangle &\forall \psi \in \mathbb{V}_{\text{DG}}, \label{CH_dg_phi} \\
&\left\langle \tfrac{1}{d(\phi_h)}\mathbf{v}, \mathbf{j}_h\right\rangle - \left\langle \nabla \cdot \mathbf{v}, \mu_h \right\rangle = 0 &\forall \mathbf{v} \in \mathring{\mathbb{V}}_{H(\text{div})}, \label{j_dg}\\
&\left\langle \psi, \mu_h - \phi_h^3 + \phi_h + \epsilon^2 \nabla \cdot \bs{\sigma}_h \right\rangle = 0 &\forall \psi \in \mathbb{V}_{\text{DG}}, \label{mu_dg}\\
&\langle \mathbf{v}, \bs{\sigma}_h \rangle + \langle \nabla \cdot \mathbf{v}, \phi_h \rangle = 0 &\forall \mathbf{v} \in \mathring{\mathbb{V}}_{H(\text{div})}. \label{sigma_dg}
\end{align}
\end{subequations}
\endgroup
\end{definition}
In other words, we introduced two auxiliary variables corresponding to the (negative) discrete phase field flux $\mathbf{j}_h$, as well as the discrete phase field's gradient $\bs{\sigma}_h$. As in the motivational example of the Poisson equation, the Neumann boundary conditions now appear in the form of Dirichlet boundary conditions for the two auxiliary variables.

Note that in the flux equation \eqref{j_dg} -- which corresponds to a weak discrete form of -$d(\phi)\nabla \mu$ -- we divided by $d(\phi)$ since otherwise, we would have a term $\nabla \cdot (d(\phi)\mathbf{v})$. This would require additional facet integral terms since a $\phi_h$-dependent diffusion coefficient need not be continuous across facets, for $\phi_h \in \mathbb{V}_{\text{DG}}$. The resulting necessary assumption $d(\phi) > 0$ for \eqref{j_dg} need not hold true for e.g., the transmission type coefficient $d(\phi) = d_0 (1 - \phi)(1 + \phi)$. In practice, we expect that this can be resolved simply by adding a small tolerance $\epsilon_0 \ll 1$ to $d$, that is $d(\phi_h) \rightarrow \text{max}\big(d(\phi_h), \epsilon_0\big)$. Note that the below proofs on structural properties hold true irrespective of the definition of $d$, including modifications thereof such as the aforementioned tolerance.
%
\begin{remark}
Since both Laplacian operators are discretized using a stable method, one can expect the overall method to be stable. In fact, for constant diffusion coefficient $d(\phi) =d_0 > 0$, \eqref{CH_DG} can be seen as the \textit{dual} version of the mixed CG setup \eqref{CG_discr} with respect to the aforementioned discrete de-Rham complex
\begin{equation}
    \xymatrix{
      \mathbb{V}_{\text{CG}} \ar[r]^{\nabla}  &\mathbb{V}_{H(\text{curl})} \ar[r]^{\nabla \times} & \mathbb{V}_{H(\text{div})} \ar[r]^{\nabla \cdot}  & \mathbb{V}_{\text{DG}}}. \label{dR_complex}
\end{equation}
To uncover the duality, note that if $\phi_h$ and $\mu_h$ are elements of $\mathbb{V}_{\text{CG}}$, then their gradients $\nabla \phi_h$ and $\nabla \mu_h$ are elements of $\mathbb{V}_{H(\text{curl})}$. The mixed CG setup can therefore equivalently be rewritten as
\begingroup
\addtolength{\jot}{2mm}
\begin{subequations} \label{CG_coupling_js}
\begin{align}
&\left\langle \eta, \pp{\phi_h}{t} \right\rangle + \left\langle \nabla \eta, \mathbf{j}_h \right\rangle = \langle \eta, S \rangle, & \forall \eta \in \mathbb{V}_{\text{CG}},\\
&\left\langle \mathbf{w}, \tfrac{1}{d_0}\mathbf{j}_h - \nabla \mu_h \right\rangle = 0 &\forall \mathbf{w} \in \mathbb{V}_{H(\text{curl})}, \\
&\left\langle \eta, \mu_h \right\rangle = \left\langle \eta, \phi_h^3 - \phi_h \right\rangle + \left\langle \nabla \eta, \epsilon^2 \bs{\sigma}_h \right\rangle , & \forall \eta \in \mathbb{V}_{\text{CG}}, \\
&\langle \mathbf{w}, \bs{\sigma}_h - \nabla \phi_h \rangle = 0 &\forall \mathbf{w} \in \mathbb{V}_{H(\text{curl})},
\end{align}
\end{subequations}
\endgroup
which is dual to the DG discretization \eqref{CH_DG} in the sense that we consider the opposite side of the de-Rham complex, with $(\mathbb{V}_{H(\text{div})}, \mathbb{V}_{\text{DG}})$ instead of $(\mathbb{V}_{\text{CG}}, \mathbb{V}_{H(\text{curl})})$ -- and therefore swapped occurrences of weak and strong applications of the gradient and divergence operators, as well as strongly applied boundary conditions rather than weakly. Note that this duality holds only on the level of the de-Rham complex, and does not imply that the two discretizations are equivalent, with differences e.g. in local versus global mass conservation, and different convergence rates analogously to the case for the CG and mixed DG discretizations for the Poisson equation.
\end{remark}

In the following proposition, we demonstrate the novel spatial discretization's structure-preserving properties. For this purpose, we define strong consistency of a discrete equation or property in the usual sense, such that the underlying non-discretized equation's exact solution also satisfies the discrete equation or property.
\begin{proposition} \label{prop_DG_ec}
Up to the forcing term $S$, the mixed-DG spatial discretization \eqref{CH_DG} conserves total mass. Further, its discrete total energy, given by
\begin{equation}
H_{D\!G}(\phi_h) \coloneqq \int_\Omega \left(F(\phi_h) + \frac{\epsilon^2}{2} |\bs{\sigma}_h(\phi_h)|^2\right)dx, \label{CH_energy_dg}
\end{equation}
is such that its rate of change of time is strongly consistent.
\end{proposition}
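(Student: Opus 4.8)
The plan is to reproduce, at the discrete level, the continuous chain-rule derivations \eqref{mass_dt} and \eqref{HCH_dt}, exploiting that the test spaces $\mathbb{V}_{\text{DG}}$ and $\mathring{\mathbb{V}}_{H(\text{div})}$ are rich enough to contain the discrete quantities we wish to substitute. For mass conservation I would simply test the phase field equation \eqref{CH_dg_phi} with the admissible choice $\psi = 1 \in \mathbb{V}_{\text{DG}}$. The time-derivative term then collapses to $\tfrac{d}{dt}\int_\Omega \phi_h\,dx$, while $\left\langle 1, \nabla\cdot\mathbf{j}_h\right\rangle = \int_{\partial\Omega}\mathbf{j}_h\cdot\mathbf{n}\,ds = 0$ because $\mathbf{j}_h \in \mathring{\mathbb{V}}_{H(\text{div})}$ has vanishing normal trace on $\partial\Omega$. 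This yields $\tfrac{d}{dt}\int_\Omega\phi_h\,dx = \int_\Omega S\,dx$, the discrete analogue of \eqref{mass_dt}, so total mass is conserved up to forcing.

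For the energy, the central step is a discrete chain rule identifying $\mu_h$ as the discrete variational derivative of $H_{D\!G}$. I would first differentiate \eqref{CH_energy_dg}, obtaining $\tfrac{dH_{D\!G}}{dt} = \left\langle F'(\phi_h), \pp{\phi_h}{t}\right\rangle + \epsilon^2\left\langle\bs{\sigma}_h, \pp{\bs{\sigma}_h}{t}\right\rangle$. Since $\bs{\sigma}_h$ is an auxiliary variable rather than $\nabla\phi_h$ itself, its time derivative must be transferred back onto $\pp{\phi_h}{t}$: differentiating the constraint \eqref{sigma_dg} in time and testing with $\mathbf{v} = \bs{\sigma}_h \in \mathring{\mathbb{V}}_{H(\text{div})}$ gives $\left\langle\bs{\sigma}_h, \pp{\bs{\sigma}_h}{t}\right\rangle = -\left\langle\nabla\cdot\bs{\sigma}_h, \pp{\phi_h}{t}\right\rangle$. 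Substituting this and using $F'(\phi_h) = \phi_h^3 - \phi_h$ reduces the rate to $\left\langle F'(\phi_h) - \epsilon^2\nabla\cdot\bs{\sigma}_h, \pp{\phi_h}{t}\right\rangle$, and because $\pp{\phi_h}{t}\in\mathbb{V}_{\text{DG}}$ is an admissible test function in \eqref{mu_dg}, this equals $\left\langle\mu_h, \pp{\phi_h}{t}\right\rangle$.

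Finally I would feed this identity into the flux equations. Testing \eqref{CH_dg_phi} with $\psi = \mu_h$ gives $\left\langle\mu_h, \pp{\phi_h}{t}\right\rangle = -\left\langle\mu_h, \nabla\cdot\mathbf{j}_h\right\rangle + \langle\mu_h, S\rangle$, and testing \eqref{j_dg} with $\mathbf{v} = \mathbf{j}_h$ converts the divergence term into $\left\langle\tfrac{1}{d(\phi_h)}\mathbf{j}_h, \mathbf{j}_h\right\rangle$, yielding $\tfrac{dH_{D\!G}}{dt} = -\left\|\tfrac{1}{\sqrt{d(\phi_h)}}\mathbf{j}_h\right\|_2^2 + \langle\mu_h, S\rangle$. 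Strong consistency then follows by observing that under the exact flux relation $\mathbf{j} = d(\phi)\nabla\mu$ one has $\tfrac{1}{\sqrt{d}}\mathbf{j} = \sqrt{d}\,\nabla\mu$, so this discrete law collapses exactly onto the continuous relation \eqref{HCH_dt} — crucially, with no penalty or facet terms introduced at any stage, so that the exact PDE solution (with $\bs{\sigma} = \nabla\phi$) satisfies the discrete energy rate identically.

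The step I expect to be the main obstacle is the discrete chain rule for the gradient-energy contribution: correctly transferring $\pp{\bs{\sigma}_h}{t}$ back onto $\pp{\phi_h}{t}$ via the constraint \eqref{sigma_dg}, and confirming that $\mu_h$ genuinely plays the role of the variational derivative of $H_{D\!G}$. The whole argument hinges on $\bs{\sigma}_h$ and $\pp{\bs{\sigma}_h}{t}$ being legitimate elements of $\mathring{\mathbb{V}}_{H(\text{div})}$ (so that they are admissible test functions) together with $\pp{\phi_h}{t}$ lying in $\mathbb{V}_{\text{DG}}$; the compatible de-Rham structure is precisely what guarantees these memberships, and this is where the discrete argument departs essentially from the continuous one.
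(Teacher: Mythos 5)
Your proposal is correct and follows essentially the same route as the paper: both arguments test \eqref{CH_dg_phi} with $\psi=1$ for mass, identify $\mu_h$ as the discrete variational derivative of $H_{D\!G}$ by exploiting the constraint \eqref{sigma_dg} with test function $\bs{\sigma}_h$ (your time-differentiated constraint is exactly the paper's G\^ateaux-derivative computation with $\psi = \partial\phi_h/\partial t$, using linearity of $\phi_h \mapsto \bs{\sigma}_h(\phi_h)$), and then substitute $\psi=\mu_h$ in \eqref{CH_dg_phi} and $\mathbf{v}=\mathbf{j}_h$ in \eqref{j_dg} to obtain the dissipation identity. The only (immaterial) discrepancy is the sign convention in your final consistency remark: with the discretization's convention the exact relation is $\mathbf{j}=-d(\phi)\nabla\mu$, but since the flux enters through a squared norm, the conclusion is unchanged.
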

\begin{proof}
To show total mass conservation, we set $\psi \equiv 1$ in \eqref{CH_dg_phi} and obtain
\begin{align}
\frac{d}{dt}\int_\Omega \phi_h \; dx = \int_\Omega \left(S - \nabla \cdot \mathbf{j}_h\right) dx = \int_\Omega S \; dS,
\end{align}
where in the second equality, we applied integration by parts cell-wise for the divergence. The resulting integrals for facets in the given mesh's interior then vanish since $\mathbf{j} \in \mathring{\mathbb{V}}_{H_{\text{div}}}$ has continuous normal components. Further, the integrals for boundary facets vanish due to homogeneous boundary conditions incorporated into $\mathring{\mathbb{V}}_{H_{\text{div}}}$.

To show strong consistency of the total energy evolution, we first note that $\bs{\sigma}_h$ as defined by \eqref{sigma_dg} corresponds to a consistent discretization of the discrete phase field gradient, and therefore $H_{D\!G}$ defined by \eqref{CH_energy_dg} is consistent with the non-discretized total energy functional $H_{C\!H}$ \eqref{CH_energy}. Next, we find that its variational derivative is given by
\begingroup
\addtolength{\jot}{2mm}
\begin{align}
\left\langle \psi, \dd{H_{D\!G}}{\phi_h}\right\rangle \coloneqq& \lim_{\alpha \rightarrow 0} \frac{1}{\alpha}\left(H_{D\!G}(\phi_h + \alpha \psi) - H_{D\!G}(\phi_h)\right) = \left\langle \psi, \dd{F}{\phi_h} \right\rangle + \left\langle \bs{\sigma}_h(\psi), \epsilon^2 \bs{\sigma}_h(\phi_h) \right\rangle \nonumber \\
=& \left\langle \psi, \dd{F}{\phi_h} \right\rangle - \left\langle \psi, \epsilon^2 \nabla \cdot \bs{\sigma}_h(\phi_h)\right\rangle = \langle \psi, \mu_h \rangle &\hspace{-1cm} \forall \psi \in \mathbb{V}_{\text{DG}}, \label{var_form_H_dg}
\end{align}
\endgroup
where in the second to last equality, we used the definition \eqref{sigma_dg} of $\bs{\sigma}_h$ as a general weak gradient operator applied to $\psi$, with test function $\mathbf{v} = \epsilon^2\bs{\sigma}_h(\phi_h) \in \mathring{\mathbb{V}}_{H(\text{div})}$. Additionally, we applied the definition \eqref{mu_dg} of $\mu_h$ in the last equality, noting that analogously to the non-discretized case, we have
\begin{align}
\left\langle \psi, \dd{F}{\phi_h} \right\rangle = \left\langle \psi, \phi_h^3 - \phi_h \right\rangle &&\forall \psi \in \mathbb{V}_{\text{DG}}.
\end{align}
Proceeding as in the non-discretized case \eqref{HCH_dt}, we then obtain
\begingroup
\addtolength{\jot}{2mm}
\begin{align}
\frac{dH_{D\!G}}{dt} = &\left\langle \dd{H_{D\!G}}{\phi_h}, \pp{\phi_h}{t} \right\rangle = \left\langle \mu_h, \pp{\phi_h}{t} \right\rangle = - \left\langle \mu_h, \nabla \cdot \mathbf{j}_h\right\rangle + \left\langle \mu_h, S \right\rangle = - \left\| \tfrac{1}{\sqrt{d(\phi)}} \mathbf{j}_h\right\|_2^2 \!\!+ \left\langle \mu_h, S \right\rangle, \label{Hdg_dt}
\end{align}
\endgroup
where we used the definition \eqref{j_dg} of $\mathbf{j}_h$ with test function $\mathbf{v} = \mathbf{j}_h$. Finally, we have that before discretization, the (negative) flux is defined by $\mathbf{j} = - d(\phi)\nabla \mu$, and so the non-discretized form \eqref{HCH_dt} of the rate of change of total energy equally reads
\begin{align}
\frac{dH_{C\!H}}{dt} = - \left\|\sqrt{d(\phi)}\nabla \mu\right\|_2^2 + \left\langle \mu, S \right\rangle = - \left\|- \frac{1}{\sqrt{d(\phi)}} \mathbf{j} \right\|_2^2 + \left\langle \mu, S \right\rangle = - \left\| \frac{1}{\sqrt{d(\phi)}} \mathbf{j} \right\|_2^2 + \left\langle \mu, S \right\rangle.
\end{align}
\end{proof}

Before proceeding to the case including fluid flow, we include a remark on the previous proposition's mass conservation property. As indicated before, the discrete mass of the spatial discretization \eqref{CH_DG} is conserved locally (when excluding the forcing term $S$), since $\nabla \cdot \mathbf{j}_h \in \mathbb{V}_{\text{DG}}$ and therefore $\pp{\phi_h}{t} + \nabla \cdot \mathbf{j}_h \equiv 0$ as an $L^2$ function.

\textbf{Coupling to fluid equations}. In the presence of fluid flow, the advection term $\nabla \cdot (\mathbf{u} \phi)$ can be discretized readily using e.g., classical DG upwinding \cite{kuzmin2010guide}, which adds the necessary transport stabilization without amending any of the Cahn-Hilliard equation's remaining terms.
%
\begin{definition} \label{def_dg_flow}
Consider DG and div-conforming spaces as described in Definition \ref{def_dg}. Then the mixed DG based discretization for the Cahn-Hilliard equation coupled to flow \eqref{CH_cts_flow}, with homogeneous Neumann boundary conditions for $\phi$ and $\mu$, as well as free-slip boundary conditions for $\mathbf{u}_h$, is given by finding $(\phi_h, \mathbf{j}_h, \mu_h, \bs{\sigma}_h) \in (\mathbb{V}_{\text{DG}}, \mathring{\mathbb{V}}_{H(\text{div})}, \mathbb{V}_{\text{DG}}, \mathring{\mathbb{V}}_{H(\text{div})})$, such that
\begingroup
\addtolength{\jot}{2mm}
\begin{align}
&\left\langle \psi, \pp{\phi_h}{t} + \nabla \cdot \mathbf{j}_h \right\rangle - \langle \nabla_h \psi, \mathbf{u}_h \phi_h \rangle + \int_\Gamma \left[\!\left[\mathbf{u}_h \psi \right]\!\right] \tilde{\phi}_h \; dS = \langle \psi, S\rangle &\forall \psi \in \mathbb{V}_{\text{DG}}, \label{CH_DG_flow}
\end{align}
\endgroup
and equations for $\mathbf{j}_h$, $\mu_h$, and $\bs{\sigma}_h$ as defined by \eqref{j_dg}, \eqref{mu_dg}, and \eqref{sigma_dg}, respectively, in Definition \ref{def_dg}. $\nabla_h$ denotes the element-wise gradient, $\Gamma$ the set of all interior facets of the mesh, and we applied jump, and upwind facet operations defined by
\begin{equation}
[\![\mathbf{v}]\!] \coloneqq \mathbf{v}^+ \cdot \mathbf{n}^+ + \mathbf{v}^- \cdot \mathbf{n}^-, \;\;\; \tilde{\psi} \coloneqq s(\mathbf{u}_h)^+ \psi^+ + s(\mathbf{u}_h)^- \psi^-, \;\;\; \text{for} \;\;\; s(\mathbf{u}_h) = \tfrac{1}{2} (\text{\rm sign}(\mathbf{u}_h \cdot \mathbf{n})+1) \label{def_upw}
\end{equation}
for any vector field $\mathbf{v}$ and scalar field $\psi$. Here, $\mathbf{n}$ denotes the facet normal vector, and the two sides of each mesh facet are arbitrarily denoted by $+$ and $-$ (and hence $\mathbf{n}^+ = - \mathbf{n}^-$).
\end{definition}
%
\begin{proposition}
Assume that the discrete velocity field $\mathbf{u}_h$ is defined in a space $\mathbb{V}_{u_h}$ of vector functions whose normal component across interior facets is continuous and vanishes at boundary facets. Then up to the forcing term $S$, the spatial discretization \eqref{CH_DG_flow} conserves total mass.
\end{proposition}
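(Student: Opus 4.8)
The plan is to mirror the mass-conservation argument already carried out in Proposition \ref{prop_DG_ec}, testing the phase-field equation against the constant function, and then to verify that the two newly added transport terms do not spoil the cancellation. Since $\mathbb{V}_{\text{DG}}$ contains the constants, I would set $\psi \equiv 1$ in \eqref{CH_DG_flow}. The time-derivative term immediately yields $\frac{d}{dt}\int_\Omega \phi_h\,dx$, and the flux term $\langle 1, \nabla\cdot\mathbf{j}_h\rangle$ vanishes by exactly the reasoning of Proposition \ref{prop_DG_ec}: integrating the divergence by parts cell-by-cell, the interior facet contributions cancel because $\mathbf{j}_h \in \mathring{\mathbb{V}}_{H(\text{div})}$ has continuous normal components, and the boundary facet contributions vanish by the homogeneous boundary conditions built into $\mathring{\mathbb{V}}_{H(\text{div})}$.

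It then remains to treat the two advective terms. The volume term $\langle \nabla_h \psi, \mathbf{u}_h\phi_h\rangle$ is the easy one: with $\psi \equiv 1$ the element-wise gradient $\nabla_h \psi$ vanishes identically, so this contribution is zero. The interior-facet term $\int_\Gamma [\![\mathbf{u}_h\psi]\!]\,\tilde{\phi}_h\,dS$ is where the stated hypothesis on $\mathbf{u}_h$ is needed. Substituting $\psi \equiv 1$ gives $[\![\mathbf{u}_h\psi]\!] = [\![\mathbf{u}_h]\!] = \mathbf{u}_h^+\cdot\mathbf{n}^+ + \mathbf{u}_h^-\cdot\mathbf{n}^-$; using $\mathbf{n}^+ = -\mathbf{n}^-$ together with the assumed continuity of the normal component of $\mathbf{u}_h$ across each interior facet, the two terms cancel, so that $[\![\mathbf{u}_h]\!] = 0$ pointwise on $\Gamma$. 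Crucially, this annihilates the entire facet integrand regardless of the (generally nonzero) upwind trace $\tilde{\phi}_h$, so the facet term drops out completely. Combining these observations, only the time-derivative and forcing terms survive, giving $\frac{d}{dt}\int_\Omega \phi_h\,dx = \langle 1, S\rangle = \int_\Omega S\,dx$, which is the claimed conservation law up to forcing.

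The only genuinely delicate point -- and hence the main obstacle -- is the treatment of the upwind facet term: one must recognize that conservation relies not on any property of the upwinded trace $\tilde{\phi}_h$, but solely on the normal-continuity of the velocity, which forces the jump $[\![\mathbf{u}_h]\!]$ to vanish. This is precisely the role of the assumption that $\mathbf{u}_h \in \mathbb{V}_{u_h}$ has continuous normal component across interior facets, and vanishing normal component on $\partial\Omega$; the latter also explains why no boundary transport term appears in \eqref{CH_DG_flow}. Everything else is a direct repetition of the forcing-free mass argument from Proposition \ref{prop_DG_ec}, so I expect the write-up to be short.
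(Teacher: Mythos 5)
Your proposal is correct and follows essentially the same argument as the paper: set $\psi \equiv 1$, handle $\nabla \cdot \mathbf{j}_h$ exactly as in Proposition \ref{prop_DG_ec}, note $\nabla_h 1 = 0$ for the volume transport term, and use the normal-continuity of $\mathbf{u}_h$ to conclude $[\![\mathbf{u}_h]\!] \equiv 0$ so the upwind facet term vanishes regardless of $\tilde{\phi}_h$. No gaps; your added remark on why no boundary transport term appears is a fair elaboration rather than a departure.
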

\begin{proof}
To show total mass conservation, we set $\psi \equiv 1$ as in Proposition 1, and obtain
\begin{align}
\frac{d}{dt}\int_\Omega \phi_h \; dx = \int_\Omega \left(S - \nabla \cdot \mathbf{j}_h\right) dx - \langle \nabla_h 1, \mathbf{u}_h \phi \rangle + \int_\Gamma \left[\!\left[\mathbf{u}_h \right]\!\right] \tilde{\phi}_h \; dS = \int_\Omega S \; dS,
\end{align}
where we proceeded as before for $\nabla \cdot \mathbf{j}_h$. Further, the element-wise gradient on $1$ evaluates to zero. Finally, we have $[\![\mathbf{u}]\!] \equiv 0$ given our assumption on the discrete velocity space $\mathbb{V}_u$.
\end{proof}

Note that the assumption made in Proposition 2 on $\mathbb{V}_{u_h}$ holds true e.g. for $\mathbb{V}_{u_h} = \mathring{\mathbb{V}}_{H(\text{div})}$, or for the continuous vector space $\mathbb{V}_{u_h} = \left(\mathbb{V}_{\text{CG}}\right)^n$, for dimension $n$ of our domain $\Omega$.
%
\begin{remark}
While in this work, we only consider a predefined velocity field $\mathbf{u}_h$, we here briefly remark on the coupling to the moment equation. Using a suitable discretization for the forcing term $\frac{\phi}{\rho}\nabla \mu$ occurring in the momentum equation \eqref{u_eqn_cts}, it is still possible to retain energetic consistency in the discrete energy coupling, as was the case for the CG discretization \eqref{CG_coupling}. For this purpose, we define
\begingroup
\addtolength{\jot}{2mm}
\begin{subequations} \label{DG_coupling}
\begin{align}
&\left\langle \psi, \pp{\phi_h}{t} + \nabla \cdot \mathbf{j}_h \right\rangle - \langle \nabla_h \psi, \mathbf{u}_h \phi_h \rangle + \int_\Gamma \left[\!\left[\mathbf{u}_h \psi \right]\!\right] \tilde{\phi}_h \; dS = \langle \psi, S\rangle &\forall \psi \in \mathbb{V}_{\text{DG}}, \\
&\left\langle \rho_h \mathbf{w}, \pp{\mathbf{u}_h}{t} \right\rangle + \left\langle \nabla \mu_h, \mathbf{w} \phi_h \right\rangle - \int_\Gamma \left[\!\left[\mathbf{w} \mu_h \right]\!\right] \tilde{\phi}_h \; dS = 0 & \forall \mathbf{u} \in \mathbb{V}_{u_h}.
\end{align}
As for the CG case, energetic consistency then follows analogously to the continuous case \eqref{H_consistent_cts}, by setting $\psi = \mu_h$ and $\mathbf{w} = \mathbf{u}_h$ in the phase field and momentum equations, respectively. Such an approach for energetically consistent coupling terms in the momentum equation has e.g., been shown to work well in the context of the (thermal) shallow water equations in \cite{eldred2019quasi,wimmer2020energy}.
\end{subequations}
\endgroup
\end{remark}
%
%
\subsection{Adaptive time discretization} \label{sec_time}
In order to discretize the spatial discretizations introduced in Definitions \ref{def_dg} and \ref{def_dg_flow} above in time, let our space-discretized nonlinear ODE in time take the general form
\begin{align}
    \frac{\partial y}{\partial t} = \mathcal{N}(y,t). \label{N_ODE}
\end{align}
Here we consider $s$-stage diagonally implicit Runge--Kutta methods taking the form
\begingroup
\begin{subequations} \label{RK_steps}
\begin{align}
    Y_{(i)} &= y^n + \delta t \sum_{j=1}^i a_{ij} \mathcal{N}(Y_{(j)},t^n+{c}_j\delta t)\hspace{4ex}\text{for }i=1,..,s,\\
    y^{n+1} &= y^n + \delta t \sum_{i=1}^s b_i \mathcal{N}(Y_{(i)},t^n+{c}_i\delta t), \\
    \hat{y}^{n+1} &= y^n + \delta t \sum_{i=1}^s \hat{b}_i \mathcal{N}(Y_{(i)},t^n+{c}_i\delta t),
\end{align}
\end{subequations}
\endgroup
for time step $\delta t$, time $t^n = n \delta t$ for $n \in \mathbb{N}^*$, known state $y^n$ at time $t^n$, unknown state $y^{n+1}$ to be solved for, and embedded solution $\hat{y}^{n+1}$ computed for the time step update to be described further below. We specifically utilize the second order L-stable implicit Runge-Kutta (RK) method TR-BDF2 \cite{bank1985transient,hosea1996analysis} due to its stiff accuracy, L-stability, high (2nd) stage order, and first-order embedded method. The TR-BDF method with parameter $\gamma = 2 - \sqrt{2}$ has $s=3$ stages and is given by Butcher tableaux
\begin{align}
\begin{array}
{c|c}
\mathbf{c} & A\\
\hline
&\mathbf{b} \\
&\hat{\mathbf{b}}
\end{array}
\hspace{1cm}
=
\hspace{1cm}
\begin{array}{c | c c c}
    0 & 0 & 0 & 0 \\ 
    2-\sqrt{2} & 1-\frac{1}{\sqrt{2}} & 1-\frac{1}{\sqrt{2}} & 0 \\
    1 & \frac{1}{2\sqrt{2}} & \frac{1}{2\sqrt{2}} & 1-\frac{1}{\sqrt{2}} \\\hline
      & \frac{1}{2\sqrt{2}} & \frac{1}{2\sqrt{2}} & 1-\frac{1}{\sqrt{2}} \\\hline
      & \frac{1}{3}-\frac{1}{6\sqrt{2}} & \frac{1}{2\sqrt{2}}+\frac{1}{3} & \frac{1}{3}-\frac{1}{3\sqrt{2}}
\end{array}. \label{butcher_tableau}
\end{align}
The matrix $A$ contains the RK coefficients $a_{ij}$, $\mathbf{c}$ corresponds to quadrature points in time within a time-step, $\mathbf{b}$ to weights for the new solution, and $\hat{\mathbf{b}}$ to weights for the embedded solution. In the context of our finite element-based spatial discretization for the Cahn-Hilliard equation, \eqref{N_ODE} reads
\begin{equation}
\pp{\bs{\phi}_c}{t} = M_{DG}^{-1} \mathcal{B}(\bs{\phi_c}),
\end{equation}
for degree-of-freedom coefficient vector $\bs{\phi}_c$ of $\phi_h \in \mathbb{V}_{\text{DG}}$, DG mass matrix $M_{DG}$, and right-hand side operator $\mathcal{B}$ obtained from a nonlinear variational form $b(\psi, \phi)$ defined by
\begingroup
\addtolength{\jot}{2mm}
\begin{subequations} \label{b_nonl}
\begin{align}
& b(\psi, \phi) = -\left\langle \psi, \nabla \cdot \mathbf{j}_h \right\rangle  + \langle \nabla_h \psi, \mathbf{u}_h \phi_h \rangle - \int_\Gamma \left[\!\left[\mathbf{u}_h \psi \right]\!\right] \tilde{\phi}_h \; dS + \langle \psi, S\rangle &\forall \psi \in \mathbb{V}_{\text{DG}}, \\
&\left\langle \tfrac{1}{\bar{d}(\phi_h)}\mathbf{v}, \mathbf{j}_h\right\rangle - \frac{d_0}{c_0} \left\langle \nabla \cdot \mathbf{v}, \mu_h \right\rangle = 0 &\forall \mathbf{v} \in \mathring{\mathbb{V}}_{H(\text{div})},\\
&\left\langle \psi, \mu_h\right\rangle  - c_0 \left\langle\psi,  \phi_h^3 - \phi_h - \epsilon^2 \nabla \cdot \bs{\sigma}_h \right\rangle = 0 &\forall \psi \in \mathbb{V}_{\text{DG}},\\
&\langle \mathbf{v}, \bs{\sigma}_h \rangle + \langle \nabla \cdot \mathbf{v}, \phi_h \rangle = 0 &\forall \mathbf{v} \in \mathring{\mathbb{V}}_{H(\text{div})},
\end{align}
\end{subequations}
\endgroup
where $c_0 > 0$ is a free scaling parameter set according to solver considerations to be further explained in Section \ref{sec_solver} below. Note that $c_0$ is consistent in the sense that in the strong form when substituting $\mu$ into the $\mathbf{j}$-equation, the occurrences of $c_0$ and $1/c_0$ cancel. For the same solver related reasoning, we additionally split the diffusion coefficient $d(\phi_h) = d_0 \bar{d}(\phi_h)$ into a constant value $d_0$ corresponding to a magnitude, and a normalized varying component $\bar{d}(\phi)$. In particular, for constant diffusion, we simply have $d(\phi_h) = d_0$, and $\bar{d}(\phi) \equiv 1$. Finally, we note that in our discretization of the CH equation, we only evolve the field $\phi_h$, while $\mathbf{j}_h$, $\mu_h$, and $\bs{\sigma}_h$ are auxiliary fields. From a time discretization point of view, the latter fields are not considered as separate fields evolving in time, and are instead solved anew in each nonlinear iteration within a given RK stage, in order to compute the stage's contribution to $\bs{\phi}_c$.

In practice, in order to compute for the RK steps \eqref{RK_steps}, we solve for the implicit nonlinear system of equations
\begin{equation}
R(\bs{\phi}_c^*) = M_{D\!G}(\bs{\phi}_c^* - \bs{\phi}_{rhs}) - \alpha\delta t \mathcal{B}(\bs{\phi}_c^*) = 0, \label{rk_stage_field}
\end{equation}
where $\bs{\phi}_c^*$ is the unknown degree-of-freedom coefficient vector to be solved for, $\bs{\phi}_{rhs}$ is a known degree-of-freedom coefficient vector, and $\alpha$ is a known coefficient from the Butcher tableau. Finally, we also require an explicit solve for the RK scheme's first stage, corresponding to mass matrix inversions.

\textbf{Adaptivity.} The CH equation is often characterized by short periods of rapid activity, followed by longer ones of slow change. To efficiently simulate the evolution of phase separation, it is therefore useful to consider adaptive time stepping methods \cite{soderlind2002automatic,soderlind2006adaptive}. In the context of embedded Runge--Kutta methods such as TR-BDF2, each time step outputs a solution of order $p$ and $p-1$ (for TR-BDF2 we have $p=2$), and the two solutions are compared to provide a local error estimate, which is then used to guide time-step choice given a specified error tolerance.

Here we follow the framework developed in \cite{soderlind2002automatic,soderlind2006adaptive}, and practical algorithm and parameterization detailed in \cite{ranocha2022optimized}, which proceeds as follows. Once the RK time step has been computed using $\delta t$ for updated solution $y^{n+1}$ and embedded solution $\hat{y}^{n+1}$, we define the local error estimate at entry $i$ of our finite element coefficient vectors as
\begin{equation}
    [\delta_{n+1}]_i = \frac{[y^{n+1}]_i - [\hat{y}^{n+1}]_i}
        {\textnormal{tol}_a + \textnormal{tol}_r \max\{ [y^{n+1}]_i, [\hat{y}^{n+1}]_i\}},
\end{equation}
for absolute and relative tolerances $\textnormal{tol}_a$ and $\textnormal{tol}_r$, respectively. This error measure weights each point uniformly and relative to its local solution magnitude, thus making sure that solution values close to zero or refined mesh regions of interest are not weighted less. A global error estimate for the solution at time $t^{n+1}$ is then obtained as
\begin{equation}
    \varepsilon_{n+1} \coloneqq \|\bs{\delta}_{n+1}\| = \left(\frac{1}{N}\sum_{i=1}^N ([\delta_{n+1}]_i)^2\right)^{\frac{1}{2}}.
\end{equation}
We set $\epsilon_0 = 0$ at the initial time $t^0$.

Given time step $\delta t$ used to compute $\epsilon_{n+1}$, a new time step is then suggested based on the error, fitting some polynomial through error of previous steps. A general scaling and corresponding new time step can be written as
\begin{equation}
    \rho = s_0 \; \varepsilon_{n+1}^{-\beta_1/p} \varepsilon_{n}^{-\beta_2/p}, \hspace{1cm} \hat{\rho} = 1 + \kappa \arctan\left( \frac{\rho - 1}{\kappa}\right), \hspace{1cm} \delta t \mapsfrom \hat{\rho} \delta t, \label{dt_new}
\end{equation}
where $\hat{\rho}_{m+1}$ is introduced to limit the change in step size from being overly rapid. In the numerical results section below, we set
\begin{equation}
\textnormal{tol}_a = 10^{-4}, \;\;\; \textnormal{tol}_r = 10^{-5}, \;\;\; s_0 = 0.9, \;\;\; \beta_1 = 0.4, \;\;\; \beta_2 = -0.2, \;\;\; \kappa = 2. \label{dt_params}
\end{equation}

If $\varepsilon_{n+1} > 1$, we reject the time step, and retry the RK step from time $t^n$ with the suggested reduced time step $\delta t\mapsfrom \hat{\rho}\delta t$ (the arctan scaling and parameters in \eqref{dt_params} are used to smooth the evolution of $\delta t$ and avoid failed timesteps due to excessive increases in $\delta t$, as failed timesteps can rapidly increase computational cost). Otherwise, if $\varepsilon_{n+1} < 1$, we accept the step at time $t^n$ and set the first time step to be used at time $t^{n+1}$ equal to the proposed adjusted $\delta t \mapsfrom \min \{\delta t_{max}, \hat{\rho}\delta t\}$ as computed in \eqref{dt_new}. In the numerical results section below, where we consider non-dimensionalized test cases, we consider a maximum time step of $\delta t_{max} = 12$.
\subsection{Preconditioning} \label{sec_solver}
The implicit nonlinear problem \eqref{rk_stage_field} in our adaptive implicit Runge-Kutta method relies on the form $b(\psi, \phi)$ given by \eqref{b_nonl}. Applying a Newton iteration approach, the $k^{th}$ iteration of the nonlinear solve is given by
\begin{equation}
- \left[R(\bs{\phi}^*_{c,k}), \bs{0}, \bs{0}, \bs{0}\right]^T = J^{-1}_{DG}\left[\delta \bs{\phi}^*, \mathbf{j}_c, \bs{\mu}_c, 
\bs{\sigma}_c \right]^T,
\end{equation}
for $R$ as given in \eqref{rk_stage_field}, and update $\delta \bs{\phi}_c^* = \bs{\phi}^*_{c, k+1} - \bs{\phi}^*_{c, k}$ based on coffecient vectors $\bs{\phi}^*_{c, k+1}$, $\bs{\phi}^*_{c, k}$ that correspond to the unknown vector to be solved for and known guess for $\bs{\phi}_c^*$ in \eqref{rk_stage_field}, respectively. Similarly, $\mathbf{j}_c, \bs{\mu}_c, \bs{\sigma}_c$ denote the auxiliary variable's degree-of-freedom coefficient vectors. Further, the (non-approximated) Jacobian $J_{DG}(\bs{\phi}^*_{c, k})$ is defined by
\begin{equation}
J_{DG}^{-1}\left[\delta \bs{\phi}_c^*, \mathbf{j}_c, \bs{\mu}_c, 
\bs{\sigma}_c \right]^T =
\begin{bmatrix}
M_{DG} + \alpha \delta t T'_u & \alpha \delta t D & 0& 0 \\
0 & M_{div} & - \frac{d_0}{c_0} D^T& 0 \\
-c_0 F'' & 0 & M_{DG} & c_0 \epsilon^2 D \\
D^T & 0 & 0& M_{div}
\end{bmatrix}^{-1}
\begin{bmatrix}
\delta \bs{\phi}_c^*\\
\mathbf{j}_c\\
\bs{\mu}_c\\ 
\bs{\sigma}_c
\end{bmatrix}, \label{J_non_swap}
\end{equation}
based on the form $b(\psi, \phi)$ defined in \eqref{b_nonl}. Note that we have made use of our assumption of a constant diffusion coefficient, such that $\bar{d}(\phi) \equiv 1$. As before, $M_{DG}$ corresponds to the DG mass matrix and similarly, $M_{div}$ corresponds to a div-conforming space mass matrix. Additionally,
\begin{equation}
F'' = F''(\phi^k) = 3M_{DG,{{\phi_h^k}^2}} - M_{DG}
\end{equation}
denotes the functional derivative of $F'$, where $M_{DG, {{\phi_h^{k}}^2}}$ is given by a DG mass matrix weighted by $\left(\phi_h^k\right)^2$, for DG field $\phi_h^k$ corresponding to the coefficient vector $\bs{\phi}^*_{c,k}$. Further, $D$ in \eqref{J_non_swap} denotes the discrete divergence operator. Finally, $T'_u$ denotes the discrete upwind transport operator, which is based on the corresponding form in \eqref{CH_DG_flow}, but applied to $\delta \phi_h = \phi_h^{k+1} - \phi_h^k$, with $\phi_h^{k+1}$ defined analogously to $\phi_h^k$.

In order to proceed with our preconditioning strategy, we first rearrange the Jacobian by swapping rows and columns to arrive at the equivalent block linear system
\begin{equation}
-\left[\bs{0}, \bs{0}, \bs{0}, R(\bs{\phi}^*_{c,k})\right] =
\begin{bmatrix}
M_{div} & 0 & 0 & - \frac{d_0}{c_0} D^T\\
0 & M_{div} & D^T &0\\
0 & c_0 \epsilon^2 D & -c_0 F'' & M_{DG}\\
\alpha \delta t D & 0 & M_{DG} + \alpha \delta t T'_u & 0
\end{bmatrix}^{-1}
\begin{bmatrix}
\mathbf{j}_c\\
\bs{\sigma}_c\\
\delta \bs{\phi}_c^*\\
\bs{\mu}_c\\ 
\end{bmatrix}. \label{J_swapped}
\end{equation}
To solve for the row-swapped Jacobian inverse in \eqref{J_swapped}, we then consider the following preconditioning approach. If we eliminate the top $2\!\times\!2$ block (that is we eliminate the auxiliary fluxes $\mathbf{j}_h$ and $\bs{\sigma}_h)$, we arrive at a $2\!\times\!2$ block Schur complement of the form
\begin{equation}
S_{\phi\mu} =
\begin{bmatrix}
- c_0 \left(F'' + \epsilon^2 D M_{div}^{-1}D^T\right) & M_{DG}\vspace{2mm}\\
M_{DG} + \alpha \delta t T'_u & \frac{\alpha \delta t d_0}{c_0} D M_{div}^{-1}D^T
\end{bmatrix}. \label{Schur_phimu}
\end{equation}
Note that $L_{DG} = DM^{-1}_{div}D^T$ is a discrete DG-based diffusion operator, which is nonlocal due to $M^{-1}_{div}$. We thus define $\tilde{L}_{DG}$ to be a suitable sparse approximation to $L_{DG}$; in the numerical results section below, we set $\tilde{L}_{DG}$ to be equal to a DG interior penalty based formulation of the Laplacian \cite{burman2005unified}; for details, see Appendix \ref{app_IPDG}.

We then follow the existing approach for the CG case outlined for the row-swapped system \eqref{phimu_system_swapped}, and precondition $S_{\phi\mu}$ in a similar fashion to \eqref{CG_diag_precon}, using a block diagonal preconditioner. For this purpose, we set our scaling parameter $c_0$ such that the two discrete Laplacians in \eqref{Schur_phimu} scale equally, which we found to be beneficial for our overall solver efficiency. This holds true for
\begin{equation}
c_0 = \frac{\tau}{\epsilon^2}, \hspace{1cm} \tau = \epsilon \sqrt{\alpha \delta t d_0}, \hspace{1cm} S_{\phi\mu} =
\begin{bmatrix}
- \frac{\tau}{\epsilon} F'' - \tau L_{DG} & M_{DG}\vspace{2mm}\\
M_{DG} + \alpha \delta t T'_u & \tau L_{DG}
\end{bmatrix},
\end{equation}
noting that $c_0$ is non-dimensional (since the RK coefficient $\alpha$ is non-dimensional). In this case, both Laplacians are scaled by $\tau$, which has unit m$^2$ as expected, given that the Laplacian $\Delta$ by itself has unit m$^{-2}$. The preconditioner is then given by
\begin{equation}
P_S \coloneqq
\begin{bmatrix}
M_{DG} + \frac{2\tau}{\epsilon}M_{DG} + \tau \tilde{L}_{DG}& 0 \\
0 & M_{DG} + \tau \tilde{L}_{DG} + \alpha \delta t T'_u
\end{bmatrix}. \label{DG_diag_precon}
\end{equation}
Recall that the CG-based preconditioner \eqref{CG_diag_precon} derived in \cite{brenner2018robust} was shown to be robust for an approximate version of the CG-based (row-swapped) Jacobian \eqref{phimu_system_swapped}, in which $F''$ was simplified assuming $\left(\phi_h^{k}\right)^2 \approx 1$. In our case, this simplification reads $c_0F'' = \tfrac{\tau}{\epsilon^2}F'' \approx \frac{2\tau}{\epsilon}M_{DG}$, and unlike in \cite{brenner2018robust}, here we choose to keep this term in our preconditioner's top-left block. In particular, we found it to be important to keep such a term in view of our adaptive time stepping scheme, as it can potentially lead to large time steps and $\tau$ scales with $\sqrt{\delta t}$.

In the numerical results section below, we consider \texttt{gmres} implemented in PETSc \cite{balay2019petsc} for the Schur complement preconditioning based outer solver, using a relative solver tolerance of $10^{-8}$. Further, for the inner solves, we use one iteration of SOR (implemented in parallel in PETSc as block Jacobi with SOR in each block) to approximate applications of $A_{00}^{-1}$, where $A_{00}$ is the top-left $2\!\times\!2$ Hdiv mass matrix block in \eqref{J_swapped}. Applications of ${P_S}^{-1}$ are computed by applying one iteration of classical AMG as implemented boomerAMG in hypre \cite{falgout2002hypre} to each of the diagonal blocks in \eqref{DG_diag_precon}.

\begin{remark}
For a scheme based on the CG discretization \eqref{CG_coupling}, we would proceed analogously and consider a Krylov subspace method for (a suitable scaled form of) the preconditioner \eqref{CG_diag_precon}, again applying classical AMG once to each of the diagonal blocks. Compared to the CG case, the DG case therefore leads to an additional cost by first having to take a Schur complement needed to eliminate the two vector auxiliary fields. This is analogous to the case of discretizing the Poisson equation as described in this section's initial motivation, which can be discretized either in a CG based primal form, or in an Hdiv-DG mixed form.
\end{remark}

\begin{remark}
It can be shown that the Cahn Hilliard equations can be cast in non-dimensional form
\begingroup
\begin{subequations}
\begin{align}
&\pp{\tilde{\phi}}{\tilde{t}} - \frac{1}{\text{Pe}} \tilde{\nabla} \cdot\left( \bar{d}\big(\tilde{\phi}\big) \nabla \tilde{\mu}\right) + \tilde{\nabla} \cdot (\tilde{\mathbf{u}} \tilde{\phi}) = \tilde{S}, \\
&\tilde{\mu} = \tilde{F}'(\tilde{\phi}) - \gamma^2 \Delta \phi,
\end{align}
\end{subequations}
\endgroup
where non-dimensionalized quantities are denoted with a tilde. Pe$=\tfrac{u_0 L_0}{d_0}$ denotes the P\'eclet number, and $\gamma = \epsilon/L_0$. Here, $L_0$ and $u_0 = L_0/t_0$ denote the problem's reference length scale and velocity, respectively, for reference time frame $t_0$. With the choices of resolution, Pe and $\epsilon$ used in the numerical results section below, we found our preconditioning strategy detailed above to work well. However, for very strongly advection dominated cases, the transport operator $\alpha \delta t T'_u$ may dominate over the elliptic operator related terms. In this case, the Schur complement \eqref{Schur_phimu} -- which contains $\alpha \delta t T'_u$ on the off-diagonal -- may no longer be preconditioned well by the diagonal preconditioner \eqref{DG_diag_precon}. In this case, we would skip the row-swapping step, and instead precondition the system using AIR \cite{manteuffel2019nonsymmetric, manteuffel2018nonsymmetric}, a multigrid method designed for transport dominated problems. In particular, AIR is designed considering a reduced matrix connectivity due to information being carried from the upwind direction, which works especially well for DG-upwind based discretizations as considered in this work.
\end{remark}
%
\section{Numerical results} \label{sec_Numerical_results}
Having introduced our novel spatial discretization \eqref{CH_DG} as well as the adaptive time discretization and solver strategy, we move on to presenting numerical results to test the method's order of accuracy, structure-preserving properties, stability with respect to transport, and solver robustness. Further we demonstrate the adaptive time stepping capability, as well as the DG-upwind based setup in the advection dominated regime. The tests were performed  using Firedrake~\cite{FiredrakeUserManual}, which heavily relies on PETSc \cite{balay2019petsc}. We consider quadrilateral or hexahedral meshes for all test cases, with finite element spaces given by the continuous Galerkin second polynomial order space $\mathbb{V}_{\text{CG}} = Q_2$, the second order Raviart-Thomas space $\mathbb{V}_{H(\text{div})} = RT_{c_2}^f$ in 2D as well as the second order N\'ed\'elec space $N_{c_2}^f$ in 3D, and the first order discontinuous Galerkin space $\mathbb{V}_{\text{DG}} = dQ_1$. Note that these spaces form a discrete de-Rham complex (together with the curl-conforming space $N_{c_2}^e$ in 3D). Further, we consider adaptive time stepping parameters given by \eqref{dt_params}, and set $\mathbf{u} \equiv \mathbf{0}$, $S=0$ unless noted otherwise. Finally, the relative nonlinear and linear solver tolerances are each set to $10^{-8}$.

\subsection{Accuracy and solver robustness}
\textbf{Convergence rates.} We first test the novel discretization's convergence, structural properties, as well as stability in the advection dominated regime. For the former, we consider a manufactured solution \cite{roache2002code} on a periodic 2D domain $\Omega = [0, 1]^2$, given by
\begin{equation}
\phi_{\text{IC}} = \sin(2\pi x)\sin(4\pi y). \label{phi_manufactured}
\end{equation}
Further, we set $d = 1$, $\epsilon = 0.1$. The counter-forcing term in \eqref{b_nonl} then reads
\begin{equation}
S = - \nabla \cdot \big(d \nabla (\phi_{\text{IC}}^3 - \phi_{\text{IC}} - \epsilon^2 \Delta \phi_{\text{IC}})\big) \label{counter_forcing}
\end{equation}
and is computed at quadrature points. For $\Omega$, we apply a regular quadrilateral mesh, with resolutions of $16^2$, $32^2$ and $64^2$ cells in the convergence study. We then run for ten time steps with a fixed step of $\delta t = 10^{-3}$, using a direct solver for the resulting linear systems of equations, and consider $L^2$ errors for $\phi_h$ and its discrete gradient $\bs{\sigma}_h$. Further, for comparison we compute the same error quantities using the CG-based mixed spatial discretization \eqref{CG_discr}, as well as the $L^2$ error for $\phi_h$ for a DG interior penalty-based discretization. The latter is presented e.g., in \cite{kay2009discontinuous} and is analogous to the CG-based mixed spatial discretization, except that the weak Laplacian is discretized using an interior penalty method as described in Appendix \ref{app_IPDG}. Here, we set the penalty parameter for the latter method equal to $\kappa = 4$, and results for the three spatial discretizations are given in Figure \ref{ConvStructStab}. We obtain convergence rates akin to the expected ones for the Poisson equation, given by third and second order for $\phi_h \in$ CG and its gradient $\nabla \phi_h$, respectively \cite{brenner2008mathematical}, as well as second order for $\phi_h \in$ DG and its discrete gradient $\bs{\sigma}_h$, respectively \cite{duran2008mixed}. For the interior penalty method, we also obtain a second order rate for $\phi_h$ (higher than its theoretical estimate of $1.5$, likely due to the regular periodic mesh setup).

\textbf{Structure-preserving properties.} We next consider a periodic 3D domain $\Omega = [0, 1]^3$, together with values initialized uniformly at random in the interval $[-1,1]$ for the degrees of freedom of $\phi_h$. This time, we set $d = 0.1$ and $\epsilon = 0.02$. We consider a regular hexahedral mesh with $12^3$ cells, and run up to time $t=4$ with an adaptive time step initialized at $\delta t_0 = 10^{-4}$, using the preconditioning strategy outlined in Section \ref{sec_solver}. The resulting mass error as well as energy and time step evolution are again given in Figure \ref{ConvStructStab}. We find that the mass is conserved up to a relative error of $10^{-11}$, likely related to linear and nonlinear solver tolerances. Further, as expected, the discrete rate of change of energy is always negative. Note that the large changes for mass and energy towards the initial time are due to rapid adjustments to smoothen the initial random distribution according to which our initial field is defined. Finally, as expected, the time step grows over time as the dynamics slow down and larger parts of the domain are either at phase $\phi_h = -1$ or $\phi_h = 1$. The oscillations in time step size are an interplay between increasing time steps at times of slow dynamics and decreasing ones at times of fast dynamics, such as phase absorptions and mergers; a more detailed discussion of this is included further below in this section.

\textbf{Transport stability.} Next, we study the scheme's stabilizing property with respect to advection. For this purpose, we consider a periodic 2D domain $\Omega = [0, 1]^2$, together with initial condition and prescribed flow
\begingroup
\addtolength{\jot}{2mm}
\allowdisplaybreaks
\begin{subequations}
\begin{align}
&\phi_\text{IC} = f(x, \; 0.4)f(y, \; 0.2), \;\;\; \text{for} \;\;\; f(a, a_0) = \frac{1}{2}\left(\tanh\left(\frac{a - a_0}{\sigma_0}\right) - \tanh\left(\frac{a - (1 - a_0)}{\sigma_0}\right)\right), \label{phi_flow}\\
&\mathbf{u} = [u_0, 0]^T, \hspace{1cm} u_0 \in \{0, 1\},
\end{align}
\end{subequations}
\endgroup
where $f$ corresponds to a smoothened 1D step function profile, and we set $\sigma_0 = 0.03$. $\phi_{\text{IC}}$ is then equal to 0 within the domain, except for a rectangle of value 1 at the domain's center, with a thin smooth transition zone from 0 to 1. We set $d = 1/4000$ to ensure a strongly advection dominated regime, and further $\epsilon = 1/100$. Additionally, we consider a perturbed version of a regular quadrilateral mesh with $20^2$ cells\footnote{The perturbation is implemented as a uniformly random factor of up to $\pm 0.06\delta x = 0.06 \tfrac{1}{20}$ for each coordinate corresponding to ``interior'' vertices, i.e. vertices not lying on the computational domain's periodic boundary.}, in order to avoid a reduced $\phi$ error development due to a cell-flow alignment. Finally, we add a counter-forcing term equal to \eqref{counter_forcing} using $\phi_{\text{IC}}(\mathbf{x} - \mathbf{u} t)$, for $\phi_{\text{IC}}$ given by \eqref{phi_flow} above. This then ensures an analytic solution given by $\phi(x,t) = \phi_{\text{IC}}(\mathbf{x} - \mathbf{u} t)$, noting that we consider a periodic domain. The simulation is run with a fixed time step $\delta t = 0.01$ up to time $t=8$, and results depicting the $L^2$ error for $\phi$ are given in Figure \ref{ConvStructStab}, for the mixed CG and DG discretizations \eqref{CG_CH_flow} and \eqref{CH_DG_flow}, respectively. For the case without flow, we find a larger initial projection error into the DG space $dQ_1$ than the CG space $Q_2$, related to the poorly resolved transition zone from phase 0 to 1. Further, there is an increasing error growth over time, since the neutral zone $\phi = 0$ is unstable and small numerical errors eventually force a separation of the latter zone into regions of $\phi = \pm1$. While this happens at a slightly slower rate for the DG method than the CG one, the overall trend is similar. If a background flow is included, the error growth rate for the CG is amplified as expected due to a lack of transport stabilization. For the DG case, the effect of transport is very different: initially, there is a substantial increase in error as the DG upwind stabilization leads to a smoothing effect of the poorly resolved transition zone. However, this is followed by a substantially decreased error growth rate compared to the other runs, as the upwind stabilization acts to suppress instabilities arising in the neutral zone. Overall, the results reflect the underlying discretizations' expected behavior, and qualitatively very similar error developments also hold true for smoother, more well-resolved transition zones (such as for $\sigma_0 = 0.05$; results not shown here).

\begin{figure}[ht]
$\hspace{-3.8mm}$
\begin{minipage}[b]{49mm}
\includegraphics[width=1.67\textwidth]{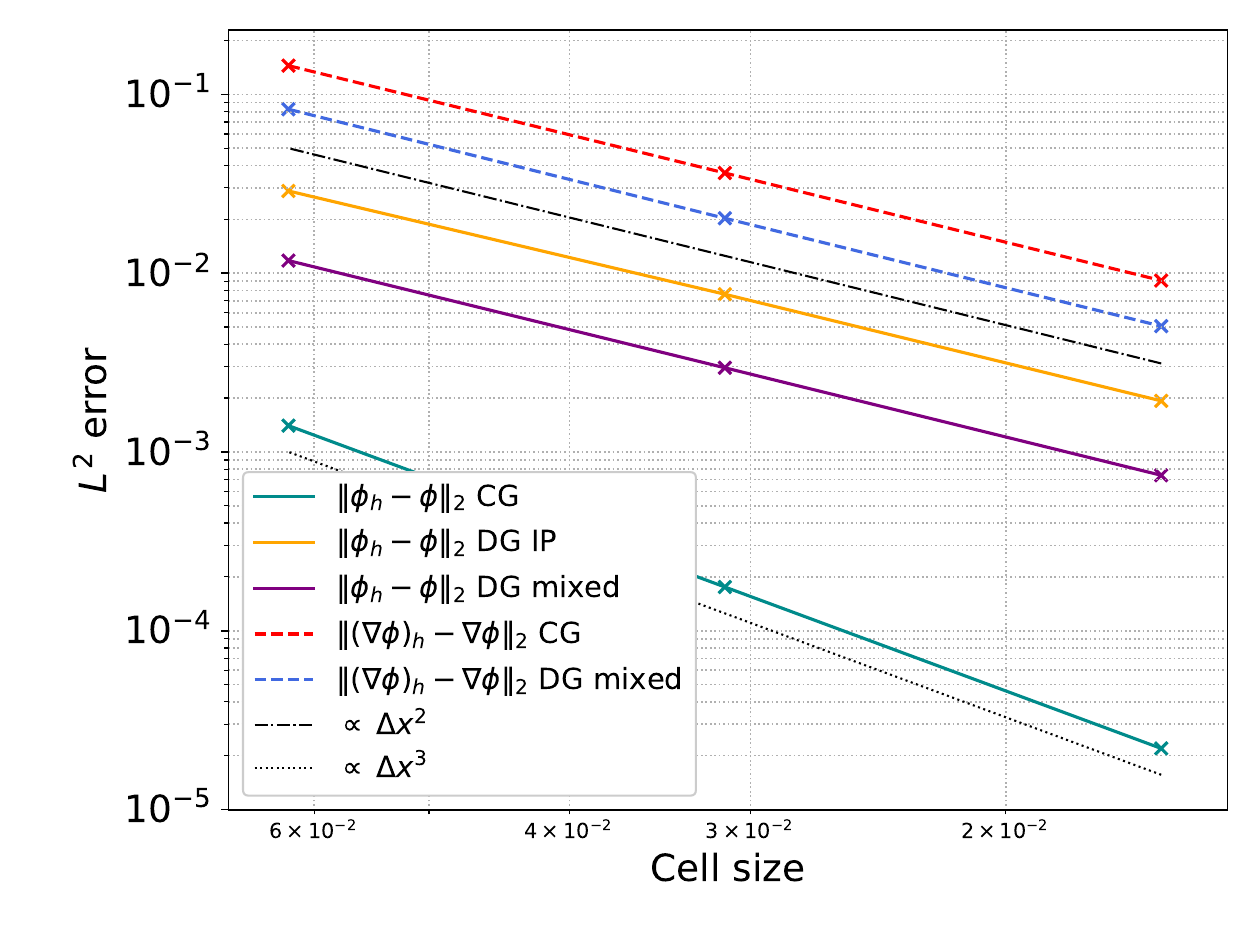}
\vspace{-8mm}\\
\includegraphics[width=1.67\textwidth]{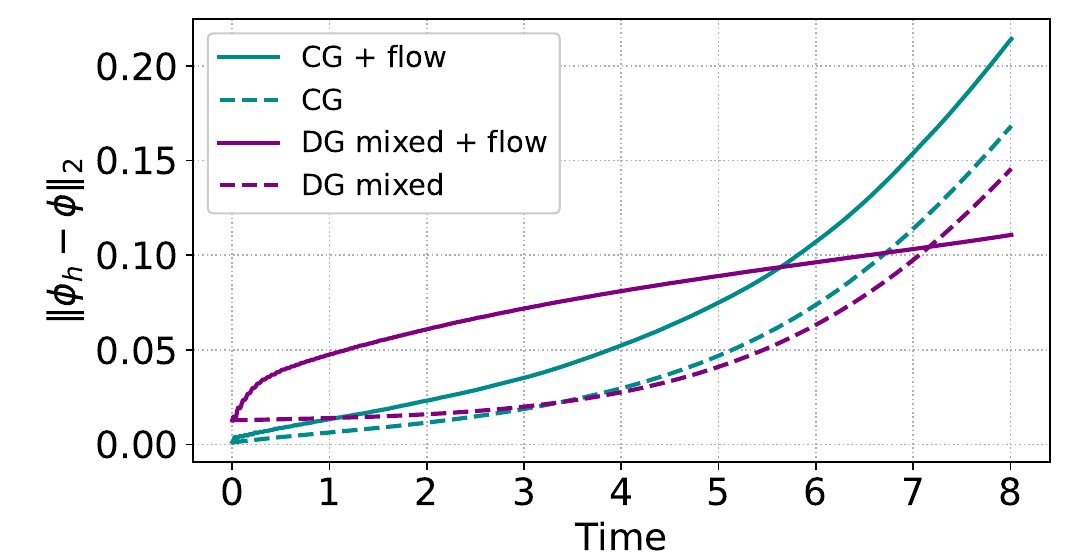}
\end{minipage}
$\hspace{31mm}$
\begin{minipage}[b]{49mm}
\includegraphics[width=1.76\textwidth]{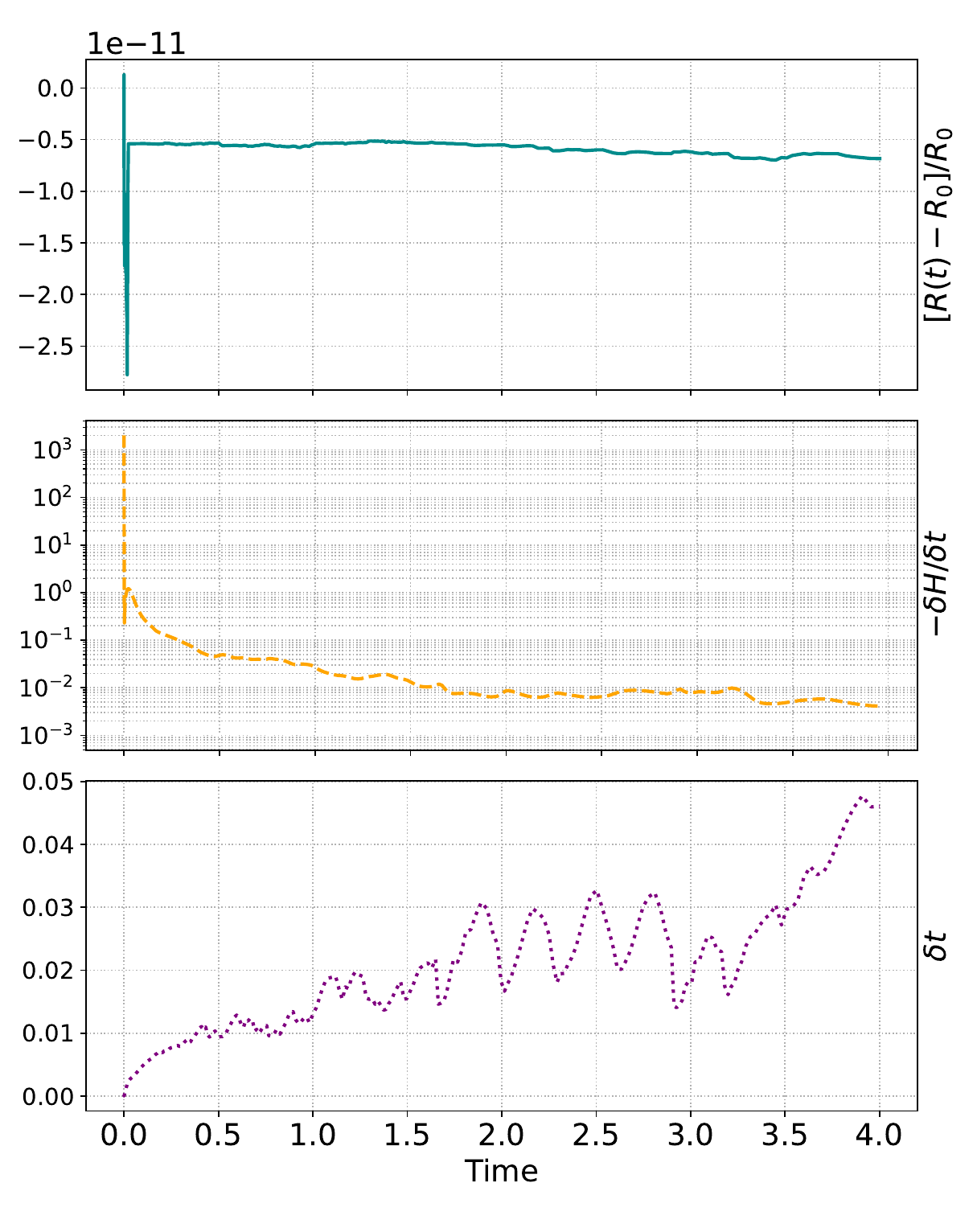}
\vspace{-9.5mm}
\end{minipage}
\vspace{-4mm}
\begin{center}
\caption{Convergence, structural and stability tests. Top left: $L^2$ errors for mixed DG (solid purple for $\phi$, dashed blue for its discrete gradient), mixed CG (solid cyan for $\phi$, dashed red for its gradient), and interior-penalty based DG (solid yellow for $\phi$) discretizations, for manufactured 2D test case \eqref{phi_manufactured}. Right column: evolution in time for relative total mass (top), negative rate of change of total energy (middle), and time step size, for 3D spinodal decomposition test case. Bottom left: $L^2$ errors for mixed DG (purple) and mixed CG (cyan) discretizations, for test case \eqref{phi_flow} with (solid) and without (dotted) flow.} \label{ConvStructStab}
\vspace{-6mm}
\end{center}
\end{figure}
%
\textbf{Solver robustness.} In order to test the proposed solver strategy's robustness, we follow a similar setup to \cite{brenner2018robust}, and consider a periodic unit square domain $\Omega = [0, 1]^2$ together with an initial condition given by
\begin{equation}
\phi_\text{IC} = \tfrac{1}{2}(1 - \cos(4 \pi x))(1 - \cos(2\pi y)) - 1, \label{IC_solver}
\end{equation}
and coefficients $d = 1$, $\epsilon \in \{2^{-4}, 2^{-6}\}$. The mesh is given by a regular quadrilateral one, with $\big[8\times 2^{k_{\delta x}}\big]^2$ cells, for $k_{\delta x} \in \{0, 1, 2, 3, 4, 5\}$, and we run 20 fixed time steps of size $0.002 \times 2^{-k_{\delta t}}$, for $k_{\delta t} \in \{3, 4, 5, 6, 7, 8\}$. The average linear iteration count per nonlinear iteration is given in Table \ref{table_its}, for a) a varying spatial resolution with fixed time step and epsilon, and b) a varying time step with fixed spatial resolution and epsilon.

\begin{table}
\begin{center}
{
\setlength{\tabcolsep}{3.7pt}
\begin{tabular}{|c|c|c|}
\multicolumn{3}{c}{\texttt{gmres} its. for $h_{\delta t} = 6$} \\
\hline
$k_{\delta x}$ & $\epsilon = 2^{-4}$ & $\epsilon = 2^{-6}$ \\
\hline
0 & 46 & 32 \\
1 & 63 & 51 \\
2 & 75 & 72 \\
3 & 65 & 76 \\
4 & 63 & 72 \\
5 & 60 & 65 \\
\hline
\end{tabular}
\hspace{1cm}
\begin{tabular}{|c|c|c|}
\multicolumn{3}{c}{\texttt{gmres} its. for $h_{\delta x} = 3$} \\
\hline
$k_{\delta t}$ & $\epsilon = 2^{-4}$ & $\epsilon = 2^{-6}$ \\
\hline
3 & 64 & 80 \\
4 & 65 & 80 \\
5 & 65 & 78 \\
6 & 65 & 76 \\
7 & 66 & 74 \\
8 & 68 & 70 \\
\hline
\end{tabular}
}
\caption{Average number of linear iterations per nonlinear iteration across implicit RK stages and last ten time steps of simulation of initial conditions \eqref{IC_solver}. The space resolution and time step are given by $\delta x = \tfrac{1}{8} \times 2^{-k_{\delta x}}$, $\delta t = 0.002 \times 2^{-k_{\delta t}}$, respectively.} \label{table_its}
\end{center}
\end{table}

For the fixed time step results, we find a mild growth in iteration count as we increase the spatial refinement level from $0$ to $2$, after which the count stagnates. Further, for the higher refinement levels, we find a mild increase in iteration count as we move from $\epsilon = 2^{-4}$ to $\epsilon = 2^{-6}$. These results are similar to the ones in \cite{brenner2018robust}; in particular we find our solver performance to be robust with respect to spatial refinement for fixed $\epsilon$, and some increase in iteration count as $\epsilon$ gets smaller. For the fixed spatial resolution runs, we again find similar results, given by robustness across time steps for fixed $\epsilon$, and an increase in iteration count as $\epsilon$ gets smaller. Finally, we note that we found similar linear iteration counts across our (implicit) Runge Kutta stages, and a small number of nonlinear iterations ($\sim$1-2) throughout.
%
\subsection{Time adaptivity and advection-dominated regimes} \label{sec_num_dt}
\textbf{Time step adaptivity.} Having verified our novel scheme's accuracy and solver robustness, we next consider more practical test cases. The first is designed to test the adaptive time step and consists of several elliptic ``bubbles'' of phase $\phi = 1$ embedded in an area of phase $\phi = -1$, within a periodic 2D domain. Over time, the elliptic bubbles will deform towards circles in order to reach a lower energetic state. In doing so, they become sufficiently close to each other as to absorb or merge with one another. The deformation occurs on a slow time scale, while the absorptions and mergers occur on a fast one. Therefore, we aim for the scheme to run at a large time step during slow deformation periods, while reducing to smaller ones when the fast time scale dominates the phase field's evolution. Over time, all bubbles merge to one whose diameter is larger than the domain's width. The last bubble then eventually merges with itself across the periodic boundary, forming a strip of phase $\phi = 1$ along the $y$-direction.

The equation's parameters are given by $d = 1/100$, $\epsilon = 1/50$. Further, we consider a periodic domain $\Omega = [0, 2] \times [0, 1]$, and we assume an initial condition expression of the form
\begin{equation}
\phi_{\text{IC}}(\mathbf{x}) =
\begin{cases}
1, \;\;\; \mathbf{x} \in \Omega_1,\\
-1, \;\;\; \text{otherwise},
\end{cases} \;\;\;
\Omega_1 = \sum_{i=1}^5  \big\{(x, y) \in \Omega \colon (x - x_i)^2 + (1-\delta_0)(y - y_i)^2 < r_i^2\big\}, \label{IC_bubbles}
\end{equation}
for elliptic region values
\begin{equation}
\begin{bmatrix}
x_1 = 0.50 & x_2 = 1.00 & x_3 = 1.59 & x_4 = 0.55 & x_5 = 1.20 \\
y_1 = 0.32 & y_2 = 0.65 & y_3 = 0.60 & y_4 = 0.80 & y_5 = 0.17 \\
r_1 = 0.23 & r_2 = 0.25 & r_3 = 0.28 & r_4 = 0.09 & r_5 = 0.11 \\
\end{bmatrix},
\end{equation}
and $\delta_0 = 0.2$. We consider a regular quadrilateral mesh with spatial resolution set to $64 \times 32$ cells, and further the initial time step is set to $\delta t = 10^{-5}$. 
The simulation is run up to $t = 1000$, and results are depicted in Figures \ref{Bubble_t_dev} and \ref{Bubble_phi}.
\begin{figure}[ht]
\begin{center}
\includegraphics[width=0.49\textwidth]{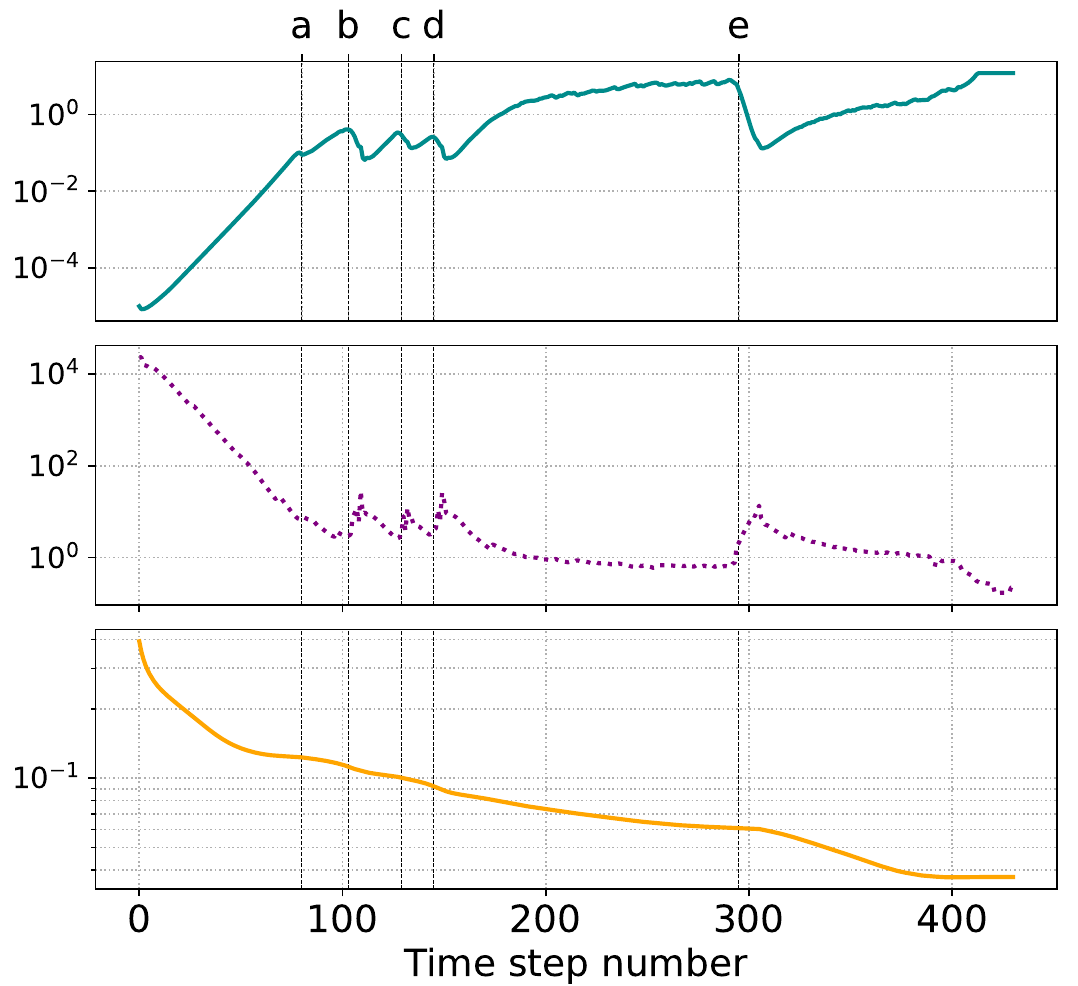}
\includegraphics[width=0.49\textwidth]{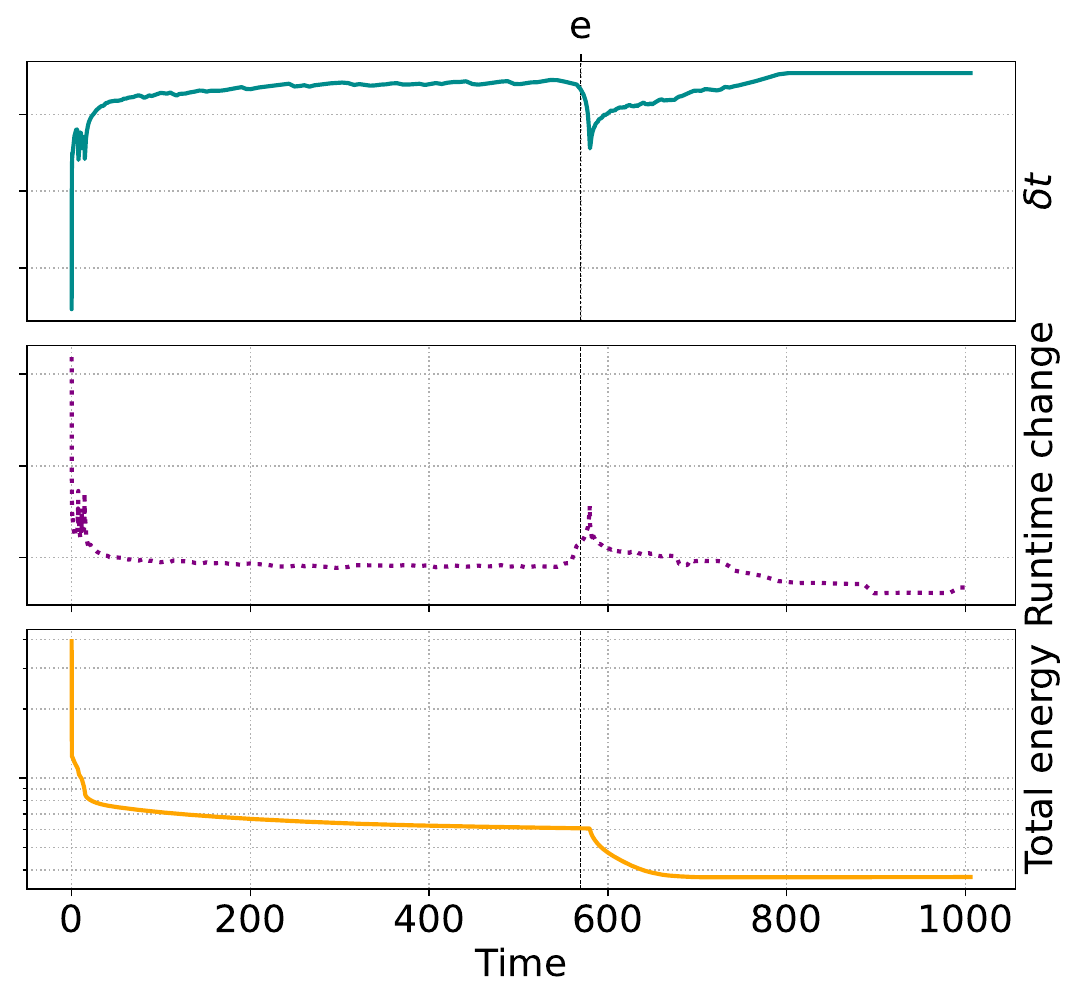}
\vspace{-2mm}
\caption{Evolution of time step (solid green; top), rate of change of cumulative total wall-clock time with respect to simulation time (dotted purple; middle), and total energy (solid orange; bottom), plotted against the number of time steps (left) and simulation time (right). The vertical lines marked from (a) to (e) denote time step number/approximate simulation time $(80/0.91)$, $(103/5.91)$, $(128/10.30)$, $(145/13.67)$ and $(297/575.09)$, respectively.} \label{Bubble_t_dev}
\end{center}
\end{figure}
\begin{figure}[ht]
\begin{center}
\includegraphics[width=.99\textwidth]{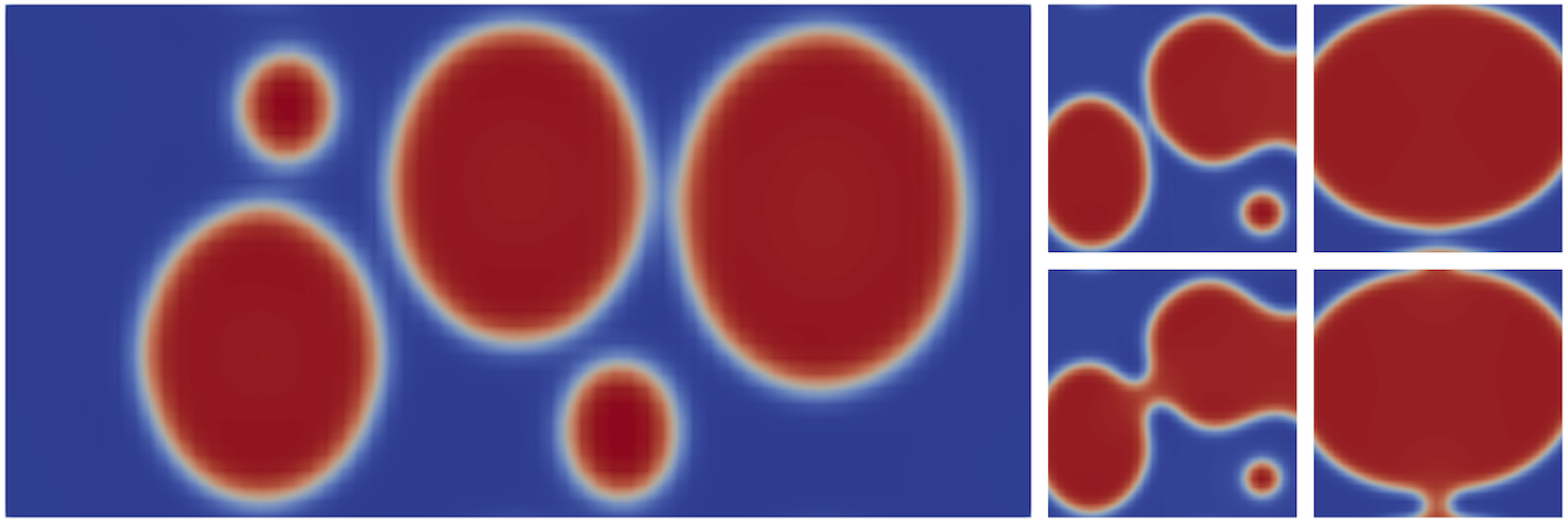}
\vspace{-3mm}
\caption{Left: $\phi$ indicator function at time $t=0.205$, for test case \eqref{IC_bubbles}. Center and right: sections of the domain at times $t=10.30$ (top-center, corresponding to (c) in Figure \ref{Bubble_t_dev}), $t=12.05$ (bottom-center), $t=575.09$ (top-right, corresponding to (e) in Figure \ref{Bubble_t_dev}), and $t=580.06$ (bottom-right). Colors correspond to blue for $\phi = -1$, white for $\phi = 0$, and red for $\phi = 1$.} \label{Bubble_phi}
\end{center}
\end{figure}

Next to periods of growth in time step size, we find five significant reductions, marked from (a) to (e) in Figure \ref{Bubble_t_dev}. Considering the phase field development (see Figure \ref{Bubble_phi}), we find that these correspond to faster evolutions given by (a) the large center and right bubbles merging, (b) the top left bubble being absorbed, (c) the left large bubble merging into the center region, (d) the small bottom bubble being absorbed, and finally (e) the center region merging with itself. Importantly, the time step is reduced ahead of time, before the actual mergers/absorptions occur (see Figure \ref{Bubble_phi} for the mergers (c) and (e)), which ensures that the fast merging dynamics are provided accurate initial conditions and initiate at the correct time. Additionally, the time step varies over 5 orders of magnitude through the simulation, eventually reaching the capped value $\delta t = 12$. This indicates that our adaptive time stepping scheme can not only accurately handle fast time scales, but also effectively step over slow ones.

In terms of wall-clock time, one could consider the change (derivative) in cumulative wall-clock time with respect to physical time. Smaller values then indicate a smaller increase in cumulative wall-clock time for a given stretch of simulation time, i.e. an overall faster runtime. Considering Figure \ref{Bubble_t_dev}, we find that regions of fast-scale evolution lead to both a decrease in time step size and spikes in change in cumulative wall-clock time. This means that as we decrease the time step to resolve fast-scale features, the simulation takes longer to compute, and as we passed the fast-scale period, the time step is increased again and the simulation time takes shorter to compute. In particular, this indicates that the wall-clock time -- which is affected by factors including assembly time and the number of nonlinear and linear iterations -- is consistent with respect to time step size.

Finally, we find that the total energy is decreasing up to time $t = 713$, reaching $H_{D\!G} \approx 0.03717$. From then on, the total energy grows slowly to $0.03727$ at time $t=1000$, at which point its growth rate is smaller than $10^{-9}$. While this growth is unphysical and related to large time steps in the adaptive time discretization -- which has no theoretical guarantee for preserving the equation's energy dissipation property -- it is very small and only occurs towards the end of the simulation, where the remaining area of phase $\phi = 1$ slowly deforms into a vertical strip region and the timesteps taken are very large. Further, we note that mass is conserved up to an error of $10^{-12}$ (plot not shown here).

\textbf{Advection-dominated regime.} The last test case we consider includes a strong shear flow with low diffusivity. In such a scenario, the phase separation is directional with respect to the flow, forming filament structures \cite{o2007bubbles}. We consider a square domain $\Omega = [0, 1] \times [0, 1]$ that is periodic in the $x$-direction, with a flow field given by
\begin{equation}
\mathbf{u} = (1 + y) \mathbf{e}_x, \label{u_flow}
\end{equation}
for unit vector $\mathbf{e}_x$ in the $x$-coordinate direction. $\phi$ is set to be distributed uniformly at random within a strip $y \in [0.05, 0.95]$, and we consider homogeneous Neumann boundary conditions \eqref{Neumann_BCs} at $y=0$, $y=1$. Further, we set $d = 1/400$, $\epsilon = 1/50$.
In order to avoid alignment with the flow, we consider a pseudo-regular quadrilateral mesh as for test case \eqref{phi_flow}, with $64^2$ cells. The simulation is run up to $t = 20$, using the adaptive time stepper with initial time step $\delta t = 10^{-5}$. The resulting phase field is depicted in Figure \ref{CH_filaments}. As expected, we find that over time, the spinodal decomposition forms filaments along the direction of the flow.
\begin{figure}[ht]
\begin{center}
\includegraphics[width=1.\textwidth]{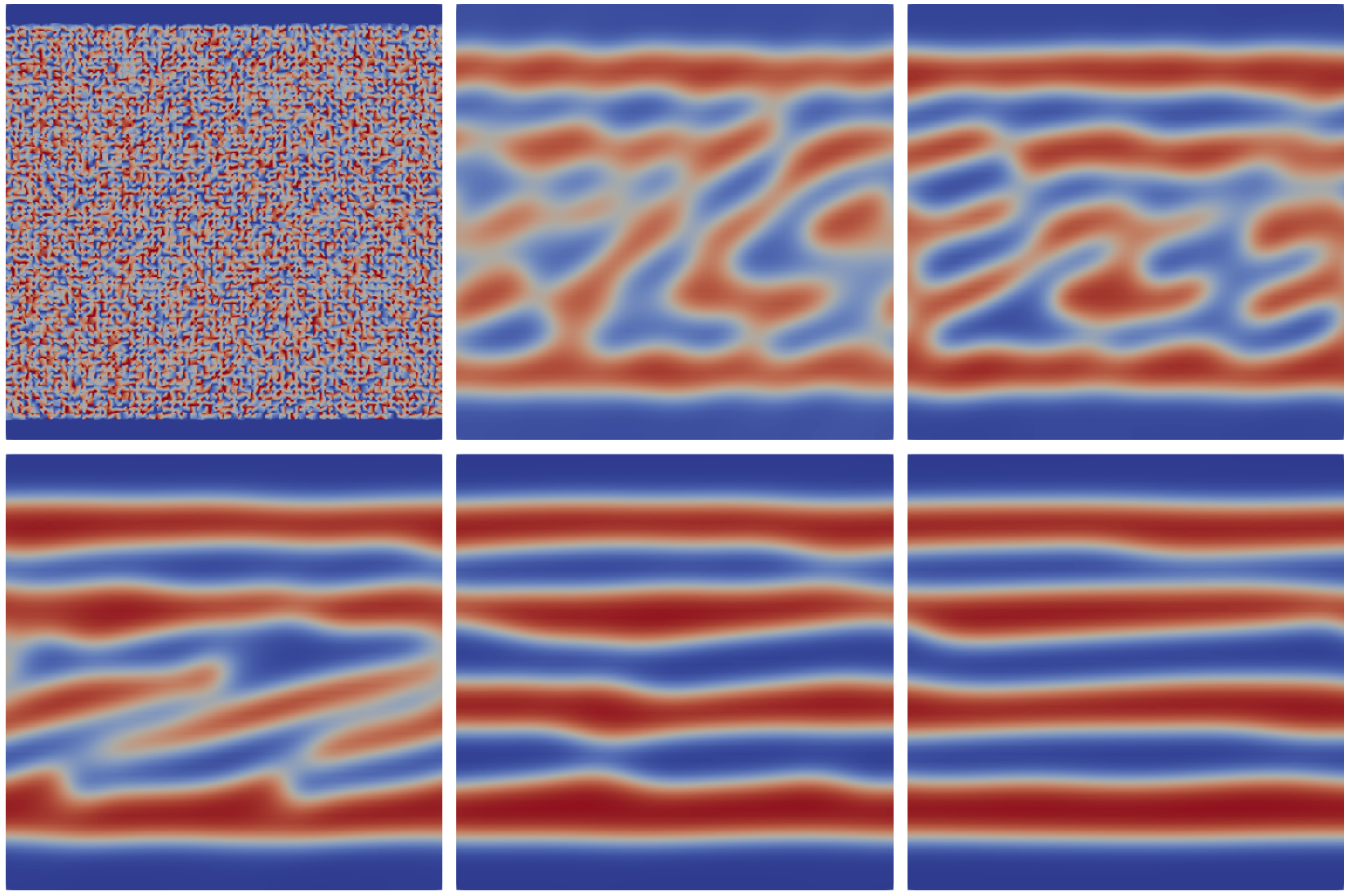}
\vspace{-5mm}
\caption{Phase field $\phi$ for advection dominant test case \eqref{u_flow} with flow along $x$-axis, at times $t \in \{0, 2, 3, 7, 10, 20\}$, from left to right, row-wise. Colors correspond to blue for $\phi = -1$, white for $\phi = 0$, and red for $\phi = 1$.} \label{CH_filaments}
\end{center}
\end{figure}
%
%
\section{Conclusion} \label{sec_conclusion}
In this work, we presented a novel spatial discretization for the Cahn-Hilliard equations with advection, consisting of a compatible finite element method with the phase field set in a discontinuous Galerkin space, and upwinding applied for the advection term. We demonstrated that the space-discretized equation is structure-preserving with respect to the mass conservation and energy dissipation. Further, we coupled the spatial discretization to an adaptive time stepping method, and derived a robust preconditioning strategy to solve for the resulting scheme's implicit stages.

In numerical tests we demonstrated the method's expected order of accuracy, stability, structure-preserving properties and solver robustness. Further, we showed the adaptive time stepping's capability to adjust both to small time steps in advance for fast dynamics such as mergers and absorptions of regions of equal phase, as well as large time steps for slow scale dynamics such as transition zone deformation for energy minimization. Together with our robust solver, this leads to an efficient, scheme for the Cahn-Hilliard equation with advection. Finally, we showed that, our discontinuous-Galerkin upwind-based discretization performs well in an advection-dominated regime and recovers the physically expected, filament shaped phase field pattern.

Possible future extensions to this work includes a study of coupling the advected Cahn-Hilliard scheme to the compressible or incompressible Navier-Stokes equations. Given our choice of space for the phase field, this could include standard discontinuous Galerkin based flux formulations of the latter equations, or mixed formulations with a divergence-conforming discretization for the flow field.

\textbf{Acknowledgments}\\
This work was supported by the Laboratory Directed Research and Development program of Los Alamos National Laboratory, under project number 20240261ER, as well as the U.S. Department of Energy Office of Fusion Energy Sciences Base Theory Program, at Los Alamos National Laboratory under contract No. 89233218CNA000001. The computations have been performed using resources of the National Energy Research Scientific Computing Center (NERSC), a U.S. Department of Energy Office of Science User Facility operated under Contract No. DE-AC02-05CH11231. Los Alamos National Laboratory report number LA-UR-24-31209.
%
%
%
%
\appendix
\section{Interior Penalty based discrete Laplacian} \label{app_IPDG}
Given an elliptic Neumann boundary condition problem for a field $u$ with $\mathbf{n} \cdot \nabla u |_{\partial \Omega} = g_u$, the interior penalty based discrete Laplacian is given by means of a variational form \cite{burman2005unified} as
\begin{align}
l(u, \psi) = \langle \nabla \psi, \nabla u \rangle \!-\!\! \int_\Gamma [\![\psi]\!]\{u\}dS \!-\!\! \int_\Gamma [\![u]\!]\{\psi\}dS \!+\!\! \int_\Gamma \frac{\kappa}{h_e} [\![u]\!][\![\psi]\!]dS \!-\!\! \int_{\partial \Omega} \psi g_u \; dS && \forall \psi \in \mathbb{V}_{DG}, \label{IPDG}
\end{align}
where $\Gamma$ denotes the set of interior facets, and $h_e = \tfrac{|K^+| + |K^-|}{2|\partial K|}$ is a length scale for the given facet. $|K^{\pm}|$ denotes the facet's adjacent cell's volumes, and $|\partial K|$ denotes the facet area. Further,
\begin{equation}
[\![\psi]\!] \coloneqq \left(\psi^+ - \psi^-\right), \hspace{1cm} \{\psi\} \coloneqq \frac{1}{2} \left(\psi^+ + \psi^-\right).
\end{equation}
For the preconditioner \eqref{DG_diag_precon}, we drop the Neumann boundary condition term, and futher the penalty parameter is set to $\kappa = 10$ in the numerical results section \ref{sec_Numerical_results}.
%
%
%
\bibliographystyle{plain}
\bibliography{paper_Cahn_Hilliard}
\end{document}